\newcommand{{\hat \}}{\hat}
\newcommand{{\tilde \}}{\tilde}
\newcommand{{\overline \}}{\bar}
\else
\usepackage[english]{babel}
\usepackage[utf8]{inputenc}
\usepackage{csquotes}
\newcommand{\eps}{\varepsilon}
\fi
\usepackage[unicode]{hyperref}
\usepackage{graphicx,braket}
\usepackage{multirow,booktabs}
\usepackage[hypcap=true]{caption}

\usepackage[backend=biber,giveninits,autolang=langname]{biblatex}
\addbibresource{math.bib}

\AtBeginDocument{

}

\newtheorem{theorem}{Theorem}
\newtheorem*{theorem*}{Theorem}
\newtheorem{lemma}{Lemma}

\newtheorem{conjecture}{Conjecture}
\newtheorem*{problem}{Problem}
\newtheorem{corollary}{Corollary}

\theoremstyle{remark}
\newtheorem{remark}{Remark}
\newtheorem*{remark*}{Remark}

\theoremstyle{definition}
\newtheorem*{definition*}{Definition}
\newtheorem{definition}{Definition}

\DeclareMathOperator{\const}{const}

\DeclareMathOperator{\Vect}{Vect}
\DeclareMathOperator{\SPS}{SPS}

\newcommand{\wiki}[2][en]{\href{https://#1.wikipedia.org/wiki/#2}{#2}}

\title[Multiple numerical invariants]{Bifurcations of the polycycle \enquote{tears of the heart}: multiple numerical invariants}

\author{Nataliya Goncharuk}
\email{ng432@cornell.edu}
\address{%
  Cornell University\\
  Department of Mathematics\\
  310 Malott Hall\\
  Ithaca, NY 14853-4201 USA}
\thanks{Supported by RFBR project 16-01-00748-a and Laboratory Poncelet.}
\thanks{Cornell University, Department of Mathematics}
\author{Yury Kudryashov}
\email{ik333@cornell.edu}
\address{%
  Cornell University\\
  Department of Mathematics\\
  310 Malott Hall\\
  Ithaca, NY 14853-4201 USA}
\thanks{Supported by RFBR project 16-01-00748-a and Laboratory Poncelet.}
\thanks{Cornell University, Department of Mathematics}
\subjclass[2010]{34C23, 37G99, 37E35}
\makeatletter
\hypersetup{
  pdftitle=\@title,
  pdfauthor=\authors,
  pdfkeywords=\@keywords,
  pdfsubject=\@subjclass
}
\makeatother
\begin{document}
\begin{abstract}
    “Tears of the heart” is a hyperbolic polycycle formed by three separatrix connections of two saddles.
    It is met in generic 3-parameter families of planar vector fields.

    In the article~\cite{IKS-th1}, it was discovered that generically, the bifurcation of a vector field with “tears of the heart” is structurally unstable.
    The authors proved that the classification of such bifurcations has a numerical invariant.

    In this article, we study the bifurcations of “tears of the heart” in more detail, and find out that the classification of such bifurcation  may have arbitrarily many numerical invariants.
\end{abstract}
\maketitle

\section{Introduction}
Due to~\cites[Theorem 3]{P59}[Theorem 2]{P62}, a~generic vector field on~the sphere~\(S^2\) is structurally stable, see definition in~\cite{AP37}.
In 1985, V.~Arnold~\cite[Sec.~I.3.2.8]{AAIS94} suggested a perspective of the development of the global bifurcation theory on the two sphere.
In particular\footnote{The rest of this paragraph, as well as large parts of \autoref{sec:prelim} are almost exact quotes from~\cite{IKS-th1}, with minor modifications. To avoid interrupting readers every time they meet an exact quote, we omit the quotation marks.}, he conjectured~\cite[Conjecture 4]{AAIS94} that a~generic \emph{finite-parameter family} of vector fields considered on the whole sphere is weakly structurally stable.
He included this conjecture in a list of six.
After the statements of these conjectures, he writes:

\blockcquote[p. 100]{AAIS94}{\itshape Certainly proofs or counterexamples to the above conjectures are necessary for investigating nonlocal bifurcations in generic \(l\)-parameter families.}

It turns out that this conjecture is wrong.
Namely,~\cite[Theorem 1]{IKS-th1} states that there exists a non-empty open subset in the space of \(3\)-parameter families of vector fields on the sphere such that each family from this set is structurally unstable.
The classification of the families from this set up to the moderate topological equivalence has a numerical invariant that can take any positive value.

Other theorems in~\cite{IKS-th1} provide us with generic families of vector fields with many numerical invariants, and even functional invariants, at the cost of higher number of parameters.
The main goal of this paper is to show that one can achieve arbitrarily many numerical invariants without increasing the number of parameters.

\section{Preliminaries}%
\label{sec:prelim}
Let us recall some definitions.
\begin{definition}
    The \emph{characteristic number} of~a~saddle is~the~absolute value of the ratio of the eigenvalues of its linearization, the negative one in the numerator.
\end{definition}
\begin{definition}
    A singular point of a vector field is called \emph{hyperbolic}, if the eigenvalues of its linearization have non-zero real parts.
\end{definition}

Denote by \(\Vect(S^2)\) the set of \(C^3\)-smooth vector fields on \(S^2\).
\begin{definition}%
    \label{def-fam-loc}
    A \emph{family of vector fields} on \(S^2\) with a base \({\mathcal B}\subset {\mathbb R}^k\) is a smooth map \(V:{\mathcal B}\to \Vect(S^2)\).
    We will also use the notation \(V=\set{v_\alpha |\alpha \in {\mathcal B}}\).
    A \emph{local} family of vector fields is a germ of a smooth map \(V:({\mathbb R}^k, 0)\to (\Vect(S^2), v_0)\).
\end{definition}
Denote by \({\mathcal V}_k(S^2)\) the space of \(k\)-parameter local families~\(V=\set{v_\alpha |\alpha \in ({\mathbb R}^k, 0)}\) of vector fields on \(S^2\) such that \(v_\alpha (x)\) is \(C^3\)-smooth in \((\alpha , x)\).

\begin{definition}%
    \label{def-oteq}
    Two vector fields \(v\) and \({\tilde v}\) on \(S^2\) are called \emph{orbitally topologically equivalent}, if there exists a~homeomorphism~\(S^2\to S^2\) that links the phase portraits of \(v\) and \({\tilde v}\),
    that is, sends orbits of~\(v\) to orbits of~\({\tilde v}\) and preserves their time orientation.
\end{definition}

The moderate equivalence was introduced in~\cite{IKS-th1} for families of vector fields with only \emph{hyperbolic singular points}, and in~\cite{GI-LBS} in the general case.
We will use the definition from~\cite{IKS-th1} since it is simpler and shorter.

For a vector field \(v\in \Vect(S^{2})\), denote by \(\SPS(v)\subset S^2\) the union of its singular points, periodic orbits and separatrices.
\begin{definition}%
    \label{def-moderate-eq}
    Two local families of vector fields \(\set{v_\alpha  | \alpha  \in  ({\mathbb R}^k, 0)}\), \(\set{{\tilde v}_{{\tilde \alpha }}| {\tilde \alpha } \in  ({\mathbb R}^k, 0)}\) on \(S^2\) with only hyperbolic singular points are called \emph{locally moderately topologically equivalent}, if there exists a germ of a map
    \begin{align}
      \label{eqn-conj}
      H&\colon ({\mathbb R}^k \times  S^2, \set{0} \times  S^2) \to  ({\mathbb R}^k \times  S^2, \set{0} \times  S^2),& H(\alpha  , x)&=(h(\alpha  ), H_\alpha  (x))
    \end{align}
    such that
    \begin{itemize}
      \item \(h\colon ({\mathbb R}^k, 0)\to ({\mathbb R}^k, 0)\) is a germ of a homeomorphism;
      \item for each \(\alpha  \in  ({\mathbb R}^k, 0)\), the map \(H_\alpha  \colon S^2 \to  S^2\) is a homeomorphism that links the phase portraits of \(v_\alpha \) and \({\tilde v}_{h(\alpha  )}\);
      \item \(H\) is continuous in \((\alpha  , x)\) at the set \(\set{0}\times \SPS(v_{0})\), and~\(H^{-1}\) is continuous in~\((\alpha , x)\) at the set~\(\set{0}\times \SPS({\tilde v}_{0})\).
    \end{itemize}
\end{definition}
See~\cite[Sec. 1.1]{IKS-th1} for a discussion of other equivalence relations on the space of families of vector fields.

\begin{remark}
    \label{rem:moderate-corr}
    The above property of \(H\) implies that if some singular points of vector fields~\(v_\alpha \) form a continuous family \(A_\alpha \), \(A_\alpha \in S^2\), then the corresponding singular points \(H_\alpha (A_\alpha )\) depend continuously on \(\alpha \).
    The same holds for limit cycles and separatrices.

    We will use this argument to enumerate singular points and separatrices of two equivalent families so that the equivalence preserves numeration.
\end{remark}

\section{Main Theorem}
\subsection{Pure existence theorem}
First, we formulate the main theorem without revealing the construction.
Given a Banach submanifold \({\mathbf M}\subset \Vect(S^2)\) of codimension~\(k\), denote by \({\mathbf M}^\pitchfork \) the set of \(k\)-parameter local families \(V\in {\mathcal V}_k(S^2)\) such that \(v_0\in {\mathbf M}\), and \(V\) is~transverse to~\({\mathbf M}\).
\begin{theorem}%
    \label{thm:main-ex}
    For each \(N\), there exist a Banach submanifold \({\mathbf{TH}}_N\subset \Vect(S^2)\) of codimension~\(3\) and a smooth surjective function~\(\Phi :{\mathbf{TH}}_N\to {\mathbb R}_+^N\) such that for two moderately topologically equivalent families \(V, {\tilde V}\in {\mathbf{TH}}_N^\pitchfork \) we have
    \begin{itemize}
      \item \(\varphi (v_0)=\varphi ({\tilde v}_0)\), where \(\varphi (v)=\Phi _1(v)+\cdots +\Phi _N(v)\);
      \item if \(\varphi (v_0)\) is irrational, then \(\Phi (v_0)=\Phi ({\tilde v}_0)\).
      \item if \(\varphi (v_0)\) is a rational number with denominator~\(q\), then for each~\(k=1,\dots ,N\) we have \(\left|\Phi _{k}(v_{0})-\Phi _{k}({\tilde v}_{0})\right|\leq \frac{2}{q}\).
    \end{itemize}
    In particular, \(\Phi \) is an invariant of the classification of the local families from the residual subset of~\({\mathbf{TH}}_N^\pitchfork \) given by \(\set{V\in {\mathbf{TH}}_N^\pitchfork |\varphi (v_0)\notin {\mathbb Q}}\) up to the moderate topological equivalence.
\end{theorem}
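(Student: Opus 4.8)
The plan is to build the submanifold $\mathbf{TH}_N$ explicitly out of a vector field with the "tears of the heart" polycycle, and to show that the bifurcation diagram of the unfolding carries $N$ independent numerical data that survive moderate equivalence. First I would recall from~\cite{IKS-th1} the local normal form near the polycycle: two hyperbolic saddles $p_1,p_2$ joined by three separatrix connections, with the condition that defines codimension $3$ being precisely that all three connections are present simultaneously. The three unfolding parameters $\alpha=(\alpha_1,\alpha_2,\alpha_3)$ split the three connections, and transversality to $\mathbf{TH}_N$ means the map $\alpha\mapsto(\alpha_1,\alpha_2,\alpha_3)$ is a local diffeomorphism; so without loss of generality the parameters themselves measure the splitting of the connections. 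The key analytic ingredient is the Dulac (corner) map near each saddle: passing a saddle with characteristic number $\nu$ sends a transversal coordinate $x$ to roughly $x^{\nu}$, so composing the three Dulac maps around the polycycle produces a monodromy-type map whose asymptotics involve the products and ratios of the characteristic numbers $\nu_1=\nu(p_1)$, $\nu_2=\nu(p_2)$. The structurally-unstable phenomenon discovered in~\cite{IKS-th1} is that for certain parameter directions a \emph{limit cycle} is born whose existence region, in the $\alpha$-space, is a cuspidal-type domain whose opening exponent is a characteristic number; that exponent is the numerical invariant in the one-invariant theorem.

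To get $N$ invariants rather than one, I would engineer the manifold $\mathbf{TH}_N$ so that the polycycle is flanked by $N$ additional "resonant" saddles (or $N$ additional semi-stable-cycle loci) placed along the three separatrix connections, each contributing its own characteristic number $\mu_1,\dots,\mu_N$, arranged so that the first-return map along the would-be cycle is a composition of $N+$const Dulac maps. Then the boundary of the limit-cycle region in $\alpha$-space is a finite union of $N$ analytic "walls," the $k$-th wall having shape $\{\alpha_2 \asymp \alpha_1^{\,r_k}\}$ with $r_k$ a monomial in the $\mu_j$; the combinatorics of which walls are visible, and in what cyclic order around $\alpha=0$ they sit, is a topological invariant of the bifurcation diagram, and each visible wall contributes the real number $r_k$ (equivalently $\log r_k$) as a coordinate. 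Setting $\Phi_k(v)$ to be (a fixed rational function of) that $k$-th exponent and $\varphi=\sum\Phi_k$ gives the function in the statement. The surjectivity of $\Phi$ onto $\mathbb{R}_+^N$ follows because the characteristic numbers of the inserted saddles can be prescribed arbitrarily and independently while keeping the codimension equal to $3$ — inserting a resonant saddle does not cost a parameter, it only constrains the module of the construction.

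The invariance argument runs through \autoref{rem:moderate-corr}: a moderate equivalence $H$ takes $v_0$ to $\tilde v_0$ preserving $\SPS(v_0)$ with continuity at $\alpha=0$, hence it matches the polycycle of $v_0$ with that of $\tilde v_0$, matches the $N$ inserted saddles in a definite order, and — crucially — conjugates the limit-cycle regions in parameter space via the base homeomorphism $h$. Because $h$ is only a homeomorphism, it can distort the walls; but the \emph{order of contact} between two analytic arcs through the origin is a bi-Lipschitz-invariant-type quantity, and more sharply, for families transverse to $\mathbf{TH}_N$ the region is a "logarithmic chart" in which $h$ must send $\alpha_1\mapsto$ something with the same logarithmic asymptotics, pinning each exponent $r_k$ exactly \emph{unless} a resonance makes two walls collide. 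This is where the rational/irrational dichotomy enters: when $\varphi(v_0)=\sum r_k$-type sum is irrational, all walls are in "general position," no two can be merged or swapped by $h$, and $\Phi(v_0)=\Phi(\tilde v_0)$ on the nose; when it is rational with denominator $q$, a periodicity of period $\sim 1/q$ appears in the return map (a genuine resonance of the Dulac composition), walls within distance $O(1/q)$ become topologically indistinguishable, so $h$ may shuffle mass between neighboring $\Phi_k$ by at most $2/q$ while the total $\sum\Phi_k=\varphi$, being read off from the single outermost asymptotic, is still rigid. The residual-set statement is then immediate: $\{\varphi(v_0)\notin\mathbb{Q}\}$ is the complement of countably many codimension-$\ge 1$ analytic conditions inside $\mathbf{TH}_N^{\pitchfork}$, hence residual, and on it $\Phi$ is a complete-up-to-this invariant.

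The main obstacle I anticipate is the third bullet — the quantitative $2/q$ estimate in the resonant case. Establishing it requires a genuinely uniform analysis of the Dulac-map composition near a resonance: one must show that the first-return map, while no longer a pure power law, is $C^0$-close to one up to a bounded (by $2/q$) exponent shift, and that a homeomorphism of the base cannot exploit the near-resonant degeneracy to move an exponent by more than that. This is exactly the "almost structurally stable" regime, and controlling it will likely occupy the bulk of the technical work; the pure-existence and surjectivity parts, by contrast, are standard transversality-plus-explicit-construction arguments once the model near the polycycle is fixed.
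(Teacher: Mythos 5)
Your construction and, more importantly, your invariance mechanism diverge from the paper's and both contain genuine gaps. First, the construction: you propose to insert \(N\) extra ``resonant'' saddles \emph{along the three separatrix connections} and claim this ``does not cost a parameter.'' It does: each additional saddle sitting on a connection creates additional separatrix connections, and every such connection is a codimension-one condition, so your manifold would have codimension larger than \(3\). The paper keeps codimension \(3\) by adding structure that is an \emph{open} condition: since \(\Lambda_e=\lambda^2\mu>1\) and \(\Lambda_i=\lambda<1\), the polycycle attracts from the exterior and the loop repels from the interior, so requiring \(N\) separatrices \(e_k\) of other saddles to wind onto \(\gamma\) from outside (and one separatrix \(i\) to wind onto \(l\) from inside) does not increase the order of degeneracy. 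The invariants \(\Phi_k\) are then not characteristic numbers of inserted saddles but the lengths, in the rectifying chart \(\xi_e\) of \autoref{lem:rect-chart}, of the arcs into which the orbits \(e_k\cap\Gamma_e\) cut the circle \(\Gamma_e/\Delta_e\); surjectivity is clear because the points \(\xi_e(E_k)\) can be moved freely.

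Second, and more seriously, your invariance argument does not work. You read the invariants off as contact exponents of analytic ``walls'' in parameter space and assert they are preserved because they are ``bi-Lipschitz-invariant-type'' quantities in a ``logarithmic chart''; but the base map \(h\) in \autoref{def-moderate-eq} is only a homeomorphism, and a homeomorphism of \(({\mathbb R}^3,0)\) preserves no order of contact, exponent, or metric asymptotics of curves whatsoever — this is precisely why extracting numerical invariants here is delicate. What a moderate equivalence \emph{does} preserve, via \autoref{rem:moderate-corr}, is the discrete, canonically ordered set of sparkling saddle-connection parameter values inside the one-dimensional subfamily \({\mathcal E}\): \(h\) must send \(\iota_n\mapsto\tilde\iota_{n+a}\) and \(\eps_{k,m}\mapsto\tilde\eps_{k,m+b}\) (with \(e_1\) distinguished to fix \(k\)). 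The paper's invariants are the asymptotic \emph{frequencies}~\eqref{eq:freq-def} with which the \(\iota_n\) fall between consecutive \(\eps_{k,m}\); these frequencies are manifestly preserved by the index-shift correspondence, and \autoref{lem:estim-ekm-in} together with the rotation-number lemma (\autoref{lem:freq}, \autoref{cor:freq}) identifies them with \(\Phi_k(v_0)/\varphi(v_0)\) when \(\varphi(v_0)\notin{\mathbb Q}\), and bounds the discrepancy by counting points of a period-\(q\) orbit in an interval (giving the \(2/q\)) when \(\varphi(v_0)=p/q\). Your rational-case heuristic (``walls within \(O(1/q)\) become indistinguishable'') has no such counting mechanism behind it, and your irrational-case rigidity rests entirely on the unjustified claim that \(h\) respects logarithmic asymptotics. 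Without replacing the wall-exponent step by an argument about data that a homeomorphism actually preserves — an ordered countable set of bifurcation values and its limit frequencies — the proof does not go through.
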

Using arguments similar to~\cite[Proposition 1]{IKS-th1}, one can deduce a version of \autoref{thm:main-ex} for \emph{non-local} families of vector fields.
Briefly speaking, if each of two equivalent non-local families \(V, {\tilde V}\) meets \({\mathbf{TH}}_{N}\) transversely at a unique point~\(v_{0}\), \({\tilde v}_{0}\), and is included by a small neighborhood of~\({\mathbf{TH}}_{N}\), then the linking homeomorphism~\(h\) sends \(0\) to~\(0\), and we can apply \autoref{thm:main-ex} to the germs \((V, 0)\) and \(({\tilde V}, 0)\).

\subsection{The polycycle “tear of the heart”}%
\label{sec:polycycle-tear-heart}
Now, let us describe the~submanifold~\({\mathbf{TH}}_N\subset \Vect(S^2)\).
The construction mostly repeats the definition of~\(\mathring {\mathbf{T}}\) in~\cite[Sec. 2]{IKS-th1}, with the only exception that we have more separatrices winding onto the polycycle from the outside.
Let us recall the “interesting” part of the phase portrait of a vector field~\(v\in {\mathbf{TH}}_{N}\), see \autoref{fig:pc-th3}.
For a more detailed description, including extension of the phase portrait to the whole sphere, see~\cite[Sec. 2.1]{IKS-th1}.

We say that a vector field is \emph{normalized}, if it has a unique pair of a sink and a repelling limit cycle such that there are no other singular points to the same side of the limit cycle as the sink.
We say that this sink is the infinity~\(\infty \), and consider its complement \(S^2\setminus \set{\infty }\) as the plane~\({\mathbb R}^2\).
This allows us to unambiguously speak about “exterior” and “interior” of a~closed curve on the sphere.

\begin{figure}[h]
    \centering
    \includegraphics[width=.9\textwidth]{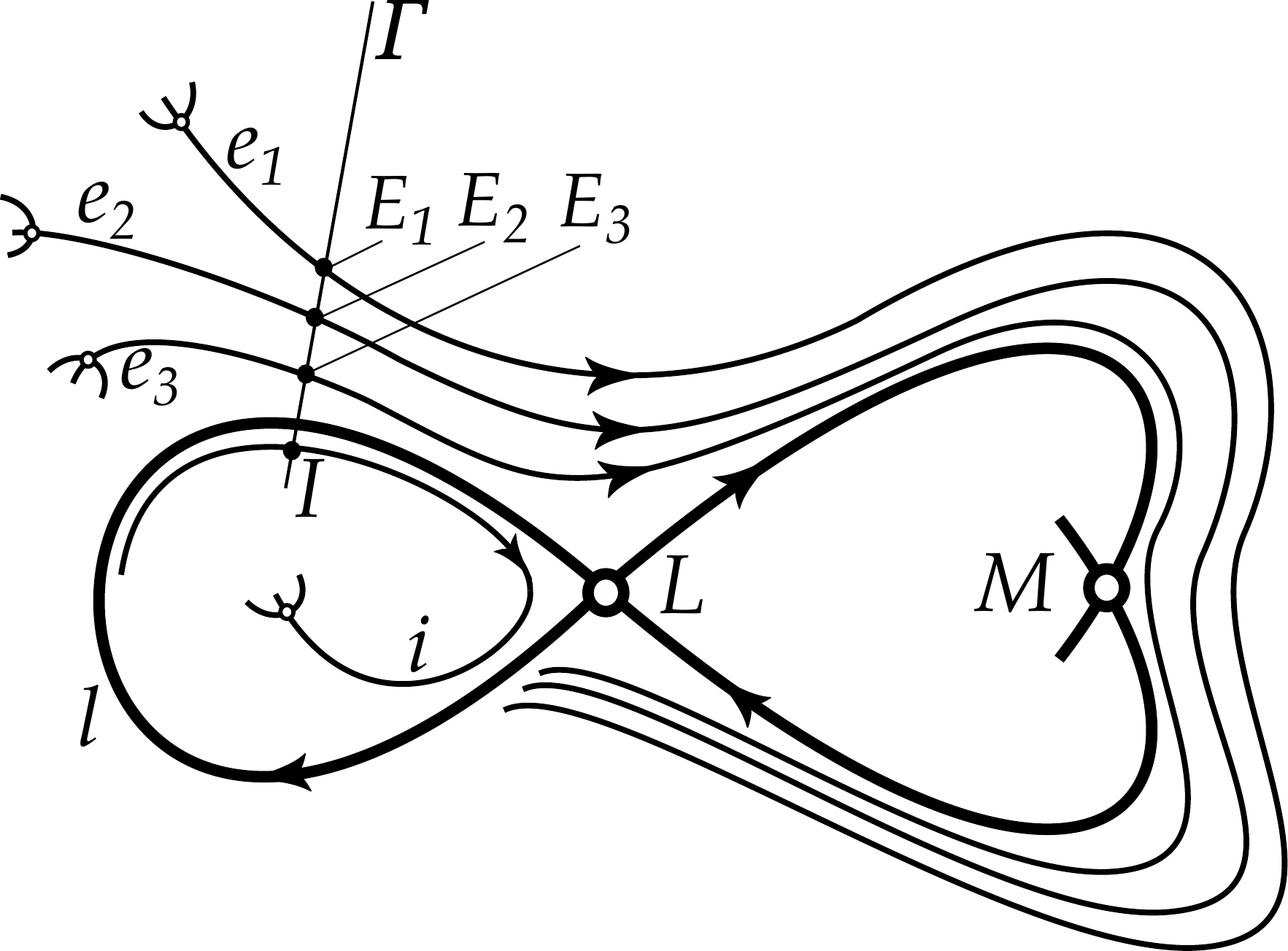}
    \caption{A vector field of class \({\mathbf{TH}}_3\)}
    \label{fig:pc-th3}
\end{figure}

Consider a normalized vector field \(v\in \Vect(S^2)\).
Suppose that it has 
\begin{itemize}
  \item two saddle points~\(L\) and~\(M\);
  \item two separatrix connections between \(L\) and \(M\);
  \item a~separatrix loop~\(l\) of~\(L\).
  \item no~saddle connections other than the three connections described above, and no non-hyperbolic singular points.
\end{itemize}
The polycycle~\(\gamma \) formed by the separatrix loop (the \emph{tear}) and the separatrix connections between \(L\) and \(M\) (the \emph{heart}) is called a \emph{polycycle of type~\(TH\)}, if the tear and the heart are located outside of each other, and the two yet unused separatrices of~\(M\) are located inside the heart.
These additional conditions imply that~\(\gamma \) is monodromic from exterior, and~\(l\) is monodromic from interior.

Now suppose that the characteristic numbers \(\lambda \), \(\mu \) of \(L\), \(M\) satisfy the inequalities
\begin{align}
  \label{eq:l-m-ineq}
  \Lambda _{i} \coloneqq  \lambda  &< 1; & \Lambda _{e}\coloneqq \lambda ^2\mu  &> 1.
\end{align}
Then the Poincaré map along~\(\gamma \) strongly attracts to the polycycle, while the Poincaré map along~\(l\) strongly repels from it, see~\cite[Sec. 4.3, Remark 12]{IKS-th1}.
Hence, the following assumptions do not increase the order of degeneracy.
\begin{definition}
    We say that a normalized vector field with a polycycle of~type~\(TH\) belongs to~\({\mathbf{TH}}_N\), if
    \begin{itemize}
      \item there is exactly one separatrix~\(i\) of some other saddle winding onto~\(l\) from the interior in the reverse time;
      \item there are exactly \(N\) separatrices~\(e_k\), \(k=1,\dots ,N\), winding onto~\(\gamma \) from the exterior;
      \item the separatrix~\(e_1\) is~topologically distinguished.
    \end{itemize}
\end{definition}
The last assumption means that for two orbitally topologically equivalent vector fields \(v, {\tilde v}\in {\mathbf{TH}}_N\), the homeomorphism linking their phase portraits brings the distinguished separatrix of \(v\) to the distinguished separatrix of~\({\tilde v}\).
There are many ways to achieve this that formally lead to different classes~\({\mathbf{TH}}_N\), and it's not important which one is used.

\cite[Theorem 4]{IKS-th1} states that
\begin{equation}
    \label{eq:phi}
    \varphi (v)=\frac{\ln \Lambda _{e}(v)}{-\ln \Lambda _{i}(v)}
\end{equation}
satisfies the first conclusion of~\autoref{thm:main-ex} if \(N=1\), and its proof can be trivially adjusted for the general case.

\subsection{Special coordinates on a cross-section}%
\label{sec:coords}
In order to define the map~\(\Phi \), we need to introduce special coordinates on a cross-section to the polycycle.
Let \(\Gamma \) be a cross-section to the tear~\(l\) at a non-singular point~\(O\).
Let \(\Gamma _e\) and \(\Gamma _i\) be the exterior and interior parts of~\(\Gamma \), respectively.
Let \(\Delta _e:(\Gamma _e, O)\to (\Gamma _e, O)\) and~\(\Delta _{i}:(\Gamma _{i}, O)\to (\Gamma _{i}, O)\) be the Poincaré maps along~\(\gamma \) and~\(l\), respectively.
Let~\(x:\Gamma \to {\mathbb R}\) be a smooth chart on~\(\Gamma \) positive on~\(\Gamma _{e}\) and negative on~\(\Gamma _{i}\)

The Poincaré map~\(\Delta _{e}\) behaves like~\(x\mapsto x^{\Lambda _e}\) near the origin, see~\cite[Corollary 3]{IKS-th1} or~\autoref{lem:IKS} below.
In the chart~\(\xi =\ln(-\ln |x|)\), the map~\(x\mapsto x^{\Lambda _e}\) is given~by~\(\xi \mapsto \xi +\ln \Lambda _{e}\), and the Poincaré map is close to it.
One can use this fact to construct a~\emph{rectifying chart}~\(\xi _e\) that conjugates~\(\Delta _{e}\) to the shift~by~\(\ln \Lambda _{e}\), and a similar chart~\(\xi _{i}\) for~\(\Delta _{i}\).
Namely, in~\autoref{sec:rectifying-chart} we shall prove the following lemma.
\begin{lemma}%
    \label{lem:rect-chart}
    There exist unique continuous maps~\(\xi _e:(\Gamma _e, O)\to ({\mathbb R}_{+}, +\infty )\), \(\xi _{i}:(\Gamma _{i}, O)\to ({\mathbb R}_{+}, +\infty )\) such that
    \begin{align*}
      \xi _{e}\circ \Delta _{e}&=\xi _{e}+\ln \Lambda _{e}; & \xi _{e}&=\ln(-\ln |x|)+o(1);\\
      \xi _{i}\circ \Delta _{i}&=\xi _{i}+\ln \Lambda _{i}; & \xi _{i}&=\ln(-\ln |x|)+o(1).
    \end{align*}
\end{lemma}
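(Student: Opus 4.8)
The plan is to transport the Poincaré maps to the chart \(\xi=\ln(-\ln|x|)\), in which they become germs of near-translations at \(+\infty\), and then to build \(\xi_e,\xi_i\) by the standard Abel-function (Fatou-coordinate) telescoping. First I would fix, on a one-sided neighbourhood of \(O\) in \(\Gamma_e\), the chart \(\xi=\ln(-\ln|x|)\), which maps it onto a neighbourhood of \(+\infty\) in \(\mathbb R_+\), and denote by \(F_e\) the germ into which \(\Delta_e\) is conjugated by this chart. The crucial input is the asymptotics of the Poincaré map recalled above — the precise form is \cite[Corollary 3]{IKS-th1}, see also \autoref{lem:IKS} below — which gives \(\Delta_e(x)=x^{\Lambda_e}\psi_e(x)\) with \(\psi_e\) bounded between two positive constants near \(O\); hence \(-\ln|\Delta_e(x)|=\Lambda_e(-\ln|x|)+O(1)\), and applying the outer logarithm converts this \(O(1)\) error into an exponentially small one:
\[
  F_e(\xi)=\xi+\ln\Lambda_e+\rho_e(\xi),\qquad \rho_e(\xi)=O(e^{-\xi}).
\]
The same computation for \(\Delta_i\) gives \(F_i(\xi)=\xi+\ln\Lambda_i+\rho_i(\xi)\) with \(\rho_i(\xi)=O(e^{-\xi})\). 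Note that by~\eqref{eq:l-m-ineq} we have \(\ln\Lambda_e>0>\ln\Lambda_i\), so \(F_e\) drifts towards \(+\infty\) while \(F_i\) drifts towards \(-\infty\).

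For \(\Delta_e\) I would then set \(A_e(\xi)=\lim_{n\to\infty}\left(F_e^n(\xi)-n\ln\Lambda_e\right)\). The consecutive differences of this sequence are \(\rho_e(F_e^n(\xi))\), and since \(F_e\) is increasing with \(F_e^n(\xi)\ge\xi+\tfrac12 n\ln\Lambda_e\) once \(\xi\) is large, they are dominated by a geometric series uniformly in \(\xi\) on neighbourhoods of \(+\infty\); so the limit exists and is continuous, with \(A_e(\xi)=\xi+\sum_{n\ge0}\rho_e(F_e^n(\xi))=\xi+o(1)\). Shifting the index in the defining limit gives \(A_e\circ F_e=A_e+\ln\Lambda_e\), so \(\xi_e:=A_e\circ\ln(-\ln|x|)\) is a continuous germ \((\Gamma_e,O)\to(\mathbb R_+,+\infty)\) with \(\xi_e\circ\Delta_e=\xi_e+\ln\Lambda_e\) and \(\xi_e=\ln(-\ln|x|)+o(1)\). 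For \(\Delta_i\), whose forward drift points towards \(-\infty\), I would run the same construction on the inverse map: set \(A_i(\xi)=\lim_{n\to\infty}\left(F_i^{-n}(\xi)+n\ln\Lambda_i\right)\); the backward orbit still tends to \(+\infty\), the same estimate gives continuity and \(A_i=\id+o(1)\) and \(A_i\circ F_i=A_i+\ln\Lambda_i\), and \(\xi_i:=A_i\circ\ln(-\ln|x|)\) does the job.

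Uniqueness is the easy part. If \(\tilde\xi_e\) is another continuous solution, then its difference with \(\xi_e\), read in the \(\xi\)-chart, is a function \(g\) with \(g\circ F_e=g\) and \(g(\xi)\to0\) as \(\xi\to+\infty\); iterating, \(g(\xi)=g(F_e^n(\xi))\to0\), so \(g\equiv0\), and the same works for \(\xi_i\) with the backward orbit. I expect the only genuine obstacle to be the first step: one has to read off from the corner-map estimates of \cite{IKS-th1} that \(\Delta_e\) and \(\Delta_i\) differ from \(x\mapsto x^{\Lambda_e}\), \(x\mapsto x^{\Lambda_i}\) only by a factor bounded away from \(0\) and \(\infty\) near \(O\). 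That boundedness is exactly what makes \(\rho_e\) and \(\rho_i\) exponentially small in the \(\xi\)-chart, hence summable along the (forward, resp.\ backward) orbit — the one place where the telescoping could fail; after that, the construction and the uniqueness argument are routine.
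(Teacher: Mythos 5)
Your proposal is correct and is essentially the paper's own argument: the paper proves a general rectification lemma (\autoref{lem:gen:rect-chart}) by the same Abel/Fatou telescoping, defining \(\xi(x)=\lim_{n}\bigl(\ln(-\ln\Delta^{n}(x))-n\ln\Lambda\bigr)\), which is exactly your \(A\circ F^{n}\)-limit read in the chart \(\ln(-\ln|x|)\), with the same \(O\left(\frac{1}{-\ln x}\right)=O(e^{-\xi})\) one-step error from \eqref{eq:IKS:Delta} and the same geometric majorization along the orbit, and it likewise treats the interior map by applying the lemma to \(\Delta_{i}^{-1}\). Your uniqueness argument (an \(F\)-invariant function tending to \(0\) at \(+\infty\) vanishes) is just a rephrasing of the paper's observation that any solution is forced to equal that limit.
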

Note that \(\xi _e\mod (\ln \Lambda _e){\mathbb Z}\) is a~well-defined chart on~the quotient space~\(\Gamma _e/\Delta _e\).
One can think about the quotient space~\(\Gamma _{e}/\Delta _{e}\) as a closed curve without contact surrounding~\(\gamma \), or as a fundamental domain \((\Delta _{e}(x_0), x_0]\), \(x_0\in \Gamma _{e}\).
\subsection{The invariant}%
\label{sec:the-invariant}
The separatrices \(e_k\), \(k=1,\dots ,N\), intersect~\(\Gamma _e\) in~\(N\)~orbits of~\(\Delta _{e}\).
These orbits split the circle~\(\Gamma _{e}/\Delta _{e}\) into~\(N\) arcs.
Let~\(\Phi _{k}\) be the length of the \(k\)-th arc with respect to the chart~\(\xi _{e}\mod (\ln \Lambda _{e}){\mathbb Z}\), rescaled so that~\(\sum _{k=1}^{n}\Phi _{k}=\frac{\ln \Lambda _{e}}{-\ln \Lambda _{i}}\).

More precisely, let us reenumerate the separatrices \(e_2, \dots , e_N\) and choose intersection points \(E_k\in e_k\cap \Gamma _e\) so that \(x(E_1)>\cdots >x(E_N)>x(\Delta _e(E_1))\).
Put
\begin{align}
  \label{eq:Phi-k}
  E_{N+1}&\coloneqq \Delta _{e}(E_{1}),&\Phi _{k}(v)&\coloneqq \frac{\xi _{e}(E_{k+1})-\xi _{e}(E_{k})}{-\ln \Lambda _{i}}.
\end{align}
Then \(\varphi (v)=\sum  \Phi _k(v)=-\frac{\ln \Lambda _{e}}{\ln \Lambda _{i}}\) which agrees with~\eqref{eq:phi}.
Clearly, the numbers \(\Phi _k\) do not depend on the choice of \(E_k\).

Finally, we are ready to formulate the explicit version of~\autoref{thm:main-ex}.
\begin{theorem}%
    \label{thm:main}
    The class~\({\mathbf{TH}}_N\) and the map \(\Phi =(\Phi _1,\dots ,\Phi _N)\) constructed above satisfy the conclusions of~\autoref{thm:main-ex}.
\end{theorem}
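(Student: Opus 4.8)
The plan is to split the statement into the geometric part (that $\mathbf{TH}_N$ is a codimension-$3$ Banach submanifold and that $\Phi$ is smooth and surjective onto $\mathbb R_+^N$) and the invariance part, and inside the latter to isolate the additive assertion $\varphi(v_0)=\varphi(\tilde v_0)$ from the two finer ones. The geometric part follows the pattern of~\cite[Sec.~2]{IKS-th1}: inside the open subset of $\Vect(S^2)$ consisting of normalized fields with two saddles $L,M$ in the prescribed mutual position, satisfying~\eqref{eq:l-m-ineq}, with the $e_k$ and $i$ spiralling as required and $e_1$ topologically distinguished (all open conditions), the class $\mathbf{TH}_N$ is cut out by the three separation functionals of the loop $l$ and of the two edges of the heart, which have linearly independent differentials; hence $\mathbf{TH}_N$ is a Banach submanifold of codimension~$3$, and it is non-empty, for one may take the $N=1$ example of~\cite{IKS-th1} and attach $N-1$ further separatrices spiralling onto $\gamma$ from outside. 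Smoothness of $\Phi$ reduces to smooth dependence on $v$ of $\Lambda_i=\lambda$ (an eigenvalue ratio), of the unstable manifolds $e_k$ and hence of the points $E_k\in e_k\cap\Gamma_e$, and of the rectifying chart $\xi_e$ of~\autoref{sec:rectifying-chart} (whose construction depends smoothly on $\Delta_e$). Surjectivity uses two independent moves: varying the characteristic numbers $\lambda,\mu$ inside~\eqref{eq:l-m-ineq} lets $\varphi=\sum_k\Phi_k=\frac{\ln\Lambda_e}{-\ln\Lambda_i}$ attain every value in $\mathbb R_+$, while localized perturbations near the saddles carrying $e_2,\dots,e_N$ move the corresponding points of the circle $\Gamma_e/\Delta_e$ independently and realize every point of the open simplex $\{\Phi_k>0,\ \sum_k\Phi_k=\varphi\}$; composing, $\Phi$ is onto $\mathbb R_+^N$.

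For the invariance, let $H(\alpha,x)=(h(\alpha),H_\alpha(x))$ realize the moderate equivalence of $V,\tilde V\in\mathbf{TH}_N^{\pitchfork}$. Then $h(0)=0$, and $H_0$ links the phase portraits of $v_0,\tilde v_0$, taking $L,M,l,\gamma,i$ to $\tilde L,\tilde M,\tilde l,\tilde\gamma,\tilde i$, taking the exterior of $\gamma$ (the side from which the $e_k$ spiral onto it) to that of $\tilde\gamma$, and preserving the flow direction along $\gamma$; hence $H_0$ preserves the oriented cyclic arrangement of the $e_k$ near $\gamma$, and since $e_1$ is distinguished it matches $E_k$ with $\tilde E_k$ for every $k$. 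The equality $\varphi(v_0)=\varphi(\tilde v_0)$ is then~\cite[Theorem~4]{IKS-th1} together with~\eqref{eq:phi}; that argument uses nothing about $N$ and carries over verbatim. Passing to quotients, $H_0$ descends to a homeomorphism $\bar H_0\colon\Gamma_e/\Delta_e\to\tilde\Gamma_e/\tilde\Delta_e$ of circles sending the marked point of $E_k$ to that of $\tilde E_k$, and the two finer assertions amount to saying that in the rectifying charts $\bar H_0$ is close to the affine map $\xi_e\mapsto s\,\xi_e+\mathrm{const}$ with slope $s=\frac{\ln\tilde\Lambda_e}{\ln\Lambda_e}=\frac{-\ln\tilde\Lambda_i}{-\ln\Lambda_i}$, the last equality being the additive part.

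Proving this rigidity is the heart of the matter, and here the parametric character of moderate equivalence is indispensable: at $\alpha=0$ the interior of $l$ and the exterior of $\gamma$ are disjoint, so topology alone --- which cannot distinguish the germ of $\Delta_e$ from that of $\tilde\Delta_e$ --- yields nothing. By transversality, the split sizes of the loop and of the two heart edges form a chart on the base; a suitable one-parameter deformation off $\mathbf{TH}_N$ inside the family turns one separatrix of $L$ into one that spirals onto a surviving subpolycycle, and whose phase, as the deformation parameter tends to $0$, runs over a set $\Psi\subset\Gamma_e/\Delta_e$; compatibility of that phase with the (moving) phases of the persistent winders $i$ and $e_1$ forces $\Psi$ to be a coset of the group $\ln\Lambda_e\,\mathbb Z+(-\ln\Lambda_i)\,\mathbb Z$ inside $\Gamma_e/\Delta_e$ --- dense if $\varphi\notin\mathbb Q$, and, in the normalization of~\eqref{eq:Phi-k} (where the circumference of $\Gamma_e/\Delta_e$ equals $\varphi$), a coset of $\frac1q\mathbb Z$ if $\varphi=p/q$ in lowest terms. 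Since $H$ and $H^{-1}$ are continuous on $\{0\}\times\SPS$, the circle homeomorphism $\bar H_0$ must carry $\Psi$ onto the corresponding set for $\tilde v_0$ while intertwining the phase dynamics, which pins $\bar H_0$ on $\Psi$ to the affine map above.

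If $\varphi\notin\mathbb Q$, density of $\Psi$ forces $\bar H_0$ to be exactly affine, so $\Phi_k(v_0)=\Phi_k(\tilde v_0)$ for every $k$. If $\varphi=p/q$, each $\bar E_k$ lies in a gap of $\Psi$ of length $\le\frac1q$ (in the $\Phi$-normalization), and its image lies in the corresponding gap for $\tilde v_0$, so each endpoint of the $k$-th arc is displaced by less than $\frac1q$; subtracting the two endpoints gives $\bigl|\Phi_k(v_0)-\Phi_k(\tilde v_0)\bigr|\le\frac2q$, and the ``in particular'' statement is immediate. The main obstacle is thus the step just described: controlling the transition maps through the corner $L$ uniformly as the deformation parameter tends to $0$, checking that the phases so produced are genuinely topological (so that $H$ is forced to respect them), and identifying $\Psi$ with a coset of the two-clock group --- this is precisely where passing from the single invariant $\varphi$ of~\cite{IKS-th1} to the whole vector $\Phi$ requires new input.
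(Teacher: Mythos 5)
Your overall skeleton is aligned with the paper's: restrict to the one-parameter subfamily where the two heart connections persist, read off the invariants from the sparkling saddle connections with the winding separatrices \(i\) and \(e_k\), use the two-clock group generated by \(\ln \Lambda _e\) and \(-\ln \Lambda _i\) (dense circle subgroup in the irrational case, gaps of size \(1/q\) in the rational case), and get the \(2/q\) bound by displacing the two endpoints of each arc by at most \(1/q\). But the step you yourself flag as ``the main obstacle'' is not an obstacle you may defer --- it \emph{is} the proof, and as formulated your version of it is both unproven and stated too strongly. A moderate equivalence \(H=(h,H_\alpha )\) does not preserve any numerical ``phase'': what it preserves (via \autoref{rem:moderate-corr}, the distinguished separatrix \(e_1\), and the ordering~\eqref{eq:ekm-order}) is only the \emph{interleaving order} of the bifurcation values \(\iota _n\) and \(\eps _{k,m}\) in the base, up to index shifts \(n\mapsto n+a\), \(m\mapsto m+b\). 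Your claim that the limit set \(\Psi \) of phases is a topologically defined coset which \({\overline H}_0\) must carry to the corresponding coset ``intertwining the phase dynamics'', thereby pinning \({\overline H}_0\) to an affine map, does not follow from this order data; the paper instead extracts from the order data only the asymptotic frequencies \(\psi _{\eps ,k}\) of~\eqref{eq:freq-def}, proves an equidistribution lemma for sequences close to rotation orbits (\autoref{lem:freq}, \autoref{cor:freq}), and identifies these frequencies with \(\Phi _k/\varphi \) --- which is weaker than affinity of \({\overline H}_0\) but sufficient and actually provable.

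The second missing ingredient is the quantitative input that makes the frequencies computable at all: the asymptotics \(\ln(-\ln \iota _n)=-n\ln \Lambda _i+\xi _i(I)+o(1)\) and \(\ln(-\ln \eps _{k,m})=m\ln \Lambda _e+\xi _e(E_k)+o(1)\) of \autoref{lem:estim-ekm-in}. Proving them requires uniform control of the \emph{perturbed} Poincaré maps \(\Delta _{i,\eps }\), \(\Delta _{e,\eps }\) through the corners as \(\eps \to 0\) (the estimates of \autoref{lem:IKS} and \autoref{lem:est-iter}, giving \(\Delta _{\eps }^{n}(\eps _n)=\Delta _0^{n}(\eps _n)+O(\eps _n^{1-\Lambda '})\)), together with the existence and the \(O(1/(-\ln x))\) normalization of the rectifying charts (\autoref{lem:gen:rect-chart}); this occupies all of \autoref{sec:asympt-sparkl-saddle} and is where the \(C^3\)-smooth (not \(C^\infty \)) setting bites. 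Your proposal mentions ``controlling the transition maps through the corner \(L\) uniformly'' only as a difficulty to be overcome later, and gives no argument for it, nor for why the resulting phases would be respected by \(H\). So the proposal correctly identifies where the new content beyond \cite{IKS-th1} must lie, but it does not supply that content, and the rigidity mechanism it proposes in its place would have to be replaced by the frequency/equidistribution argument to yield a correct proof.
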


\section{Proof of the Main Theorem modulo technical lemmas}%
\label{sec:proof}
\subsection{Unfolding of the “Tears of the heart” polycycle}%
\label{sub:unfolding}
Consider an unfolding~\(V=\set{v_{\alpha }}\in {\mathbf{TH}}_N^\pitchfork \) of a vector field \(v=v_0\) of class~\({\mathbf{TH}}_N\).
Let (semi-)transversals \(\Gamma \), \(\Gamma _i\), \(\Gamma _e\), and a coordinate \(x:\Gamma \to {\mathbb R}\) be as in \autoref{sec:coords}.

For \(\alpha \) close to zero, let \(L(\alpha )\) and \(M(\alpha )\) be the saddles close to \(L\) and \(M\), respectively.
The saddle~\(L(\alpha )\) has a stable separatrix \(s(\alpha )\) and an unstable separatrix~\(u(\alpha )\) continuously depending on~\(\alpha \) such that both \(s(0)\) and \(u(0)\) are the loop~\(l\).
Let \(S(\alpha )\) be the first intersection point of \(s(\alpha )\) with \(\Gamma \), counting from~\(L(\alpha )\).
Let \(U(\alpha )\) be the first intersection point of \(u(\alpha )\) with \(\Gamma \), counting from~\(L(\alpha )\).
The \emph{separatrix splitting} parameter
\begin{equation}
    \label{eq:eps}
    \eps \coloneqq x(S(\alpha ))-x(U(\alpha ))
\end{equation}
measures how far \(v_\alpha \) is from having a loop close to \(l\).
More precisely, \(v_\alpha \) has a loop close to~\(l\) if and only if \(\eps =0\).

We can introduce similar separatrix splitting parameters~\(\sigma _2\), \(\sigma _3\) for the separatrix connections between~\(L\) and \(M\), and the triple \(\sigma \coloneqq (\eps , \sigma _2, \sigma _3)\) is a coordinate system on the base of the family, see \cite[Sec. 2.2]{IKS-th1} for details.

As~in~\cite[Sec. 2.2.4]{IKS-th1}, let \(({\mathcal E}, 0)\) be given by \(\sigma _2=\sigma _3=0\).
Geometrically, \({\mathcal E}\) is the set of parameters \(\alpha \in ({\mathbb R}^3, 0)\) such that the saddles \(L(\alpha )\) and \(M(\alpha )\) have two separatrix connections close to those of \(L\) and \(M\).
This one-dimensional subfamily is parametrized by the parameter~\(\eps \) introduced above.
Let \({\mathcal E}_+\subset {\mathcal E}\) be given by \(\eps >0\).

From now on (unless stated otherwise) we deal only with the subfamily~\(\set{v_\alpha |\alpha \in {\mathcal E}}\), and use~\(\eps \) as~a~parameter on this subfamily.
\subsection{Sparkling separatrix connections}%
\label{sec:sparkl-saddle-conn}
Let \(\eps \), \({\mathcal E}_+\), \(L(\eps )\), \(M(\eps )\), \(s(\eps )\), \(u(\eps )\), \(S(\eps )\) and \(U(\eps )\) be as in~\autoref{sub:unfolding}.
Let \(i\), \(e_{1}\), \dots , \(e_{N}\) be the separatrices of~\(v\) introduced in \autoref{sec:polycycle-tear-heart}, enumerated as in \autoref{sec:the-invariant}.
For small \(\eps \), the vector field~\(v_{\eps }\) has separatrices \(i(\eps )\), \(e_{1}(\eps )\), \dots , \(e_{N}(\eps )\) continuously depending on~\(\eps \) such that \(i(0)=i\), \(e_{k}(0)=e_{k}\).
Let \(E_k(\eps )\), \(k=1,\dots ,N\), be continuous families of intersection points \(E_k(\eps )\in e_k(\eps )\cap \Gamma _e\) such that \(E_k(0)=E_k\).
Similarly, let us fix an intersection point \(I\in i\cap \Gamma _i\), and choose a continuous family \(I(\eps )\in i(\eps )\cap \Gamma _i\).

For some small positive values of \(\eps \), the separatrices \(u(\eps )\) and \(s(\eps )\) form the following “sparkling” separatrix connections.
\begin{itemize}
  \item for \(\eps =\iota _n\), the separatrix \(u(\eps )\) makes \(n\) turns around the loop~\(l\), then comes to~\(I(\eps )\);
  \item for \(\eps =\eps _{k,m}\), the separatrix \(s(\eps )\) makes \(m\) turns around the polycycle~\(\gamma \) in backward time, then comes to~\(E_k(\eps )\).
\end{itemize}
One can show that \(\iota _{n}\searrow 0\) as \(n\to \infty \) and \(\eps _{k,m}\searrow 0\) as \(m\to \infty \), see \cite[Lemma 1]{IKS-th1}.
\begin{remark}%
    \label{rem:num-turns}
    The number of turns the separatrix~\(s(\eps )\) makes around the polycycle~\(\gamma \) before coming to~\(E_{k}(\eps )\)) is not defined by the phase portrait of~\(v_{\eps }\).
    Indeed, a \wiki{Dehn twist} along a curve surrounding~\(\gamma \) changes this number by one.

    However, in a \emph{family} of vector fields, one can define the number of turns simultaneously for all~\(v_{\eps }\) up to an additive constant.
    One of the ways to do this is discussed in~\cite[Definition 3]{IKS-th1}, but it works only if there is exactly one separatrix outside of~\(\gamma \), and one inside~\(l\).
    Another way is to fix a~cross-section~\(\Gamma \) and points~\(E_{k}\) as above, then say that the number of turns is the number of intersection points \(s(\eps )\cap \Gamma \) between~\(S(\eps )\) and~\(E_{k}(\eps )\), including one of these two points, and similarly for the interior separatrix connections.
\end{remark}

\subsection{Plan of the proof}%
\label{sec:plan-proof}
One can prove (see~\cite[Lemma 1]{IKS-th1}) that for \(n\) large enough we have \(\iota _n>\iota _{n+1}>\cdots \), and \(\forall k, \eps _{k,m}>\eps _{k,m+1}>\cdots \).
Actually, the same arguments~\cite[Sec. 4.6.2]{IKS-th1} imply that
\begin{equation}
    \label{eq:ekm-order}
    \eps _{1,m}>\eps _{2,m}>\cdots >\eps _{N,m}>\eps _{1,m+1}>\cdots 
\end{equation}
However, the numbers \(\iota _n\) are interspered between the numbers \(\eps _{k,m}\) in a non-trivial way.
In particular, the \emph{relative density} depends on the ratio of~\(\ln \Lambda _{i}\) and \(\ln \Lambda _{e}\), see~\cite[Corollary 1]{IKS-th1}:
\[
    \lim_{\eps \to 0+}\frac{\#\set{n|\iota _n>\eps }}{\#\set{m|\eps _{1,m}>\eps }}=\frac{\ln \Lambda _{e}}{-\ln \Lambda _{i}}=\varphi (v).
\]
This fact was used in~\cite{IKS-th1} to prove that the right hand side is an invariant of moderate topological equivalence of~families \(V\in {\mathbf{TH}}_1^\pitchfork \).

For each \(n\), let \(\eps _{k_n,m_n}\) be the smallest number of the form~\(\eps _{k,m}\) that is greater than~\(\iota _n\).
Consider the frequencies
\begin{equation}
    \label{eq:freq-def}
    \psi _{\eps ,k}\coloneqq \frac{\#\set{n|\iota _n>\eps , k_n=k}}{\#\set{n|\iota _n>\eps }}.
\end{equation}
In~\autoref{sec:estim-in-ekm}, \autoref{lem:estim-ekm-in} we state some estimates on~\(\iota _{n}\) and~\(\eps _{k,m}\), and postpone their proofs to~\autoref{sec:asympt-sparkl-saddle}.
These estimates imply that on the circle \({\mathbb R}/(\ln \Lambda _{e}){\mathbb Z}\), each sequence~\(m\mapsto \ln(-\ln \eps _{k,m})\) converges to~\(\xi _{e}(E_{k})\), while \(\ln(-\ln \iota _{n})\) is close to an orbit of a rotation through~\((-\ln \Lambda _{i})\).

The frequencies~\(\psi _{\eps ,k}\) describe how often the sequence \(\ln(-\ln \iota _{n})\) visits each of~\(N\)~arcs of the circle~\({\mathbb R}/(\ln \Lambda _{e}){\mathbb Z}\).
If the rotation angle and the circle length are incommensurable, then the orbits are uniformly distributed on the circle, so these frequencies are equal to the normalized lengths~\(\frac{\Phi _{k}(v)}{\varphi (v)}\);
otherwise, we can estimate the difference between these two values, see \autoref{cor:freq} in \autoref{sec:sequ-close-orbits} for details.
Finally, in~\autoref{sec:two-equiv-fam} we use these estimates to prove \autoref{thm:main}.

\subsection{Estimates on \(\iota _n\), \(\eps _{k,m}\)}%
\label{sec:estim-in-ekm}

In order to estimate the frequencies~\(\psi _{\eps ,k}\), see \eqref{eq:freq-def}, we shall first estimate \(\iota _n\) and \(\eps _{k,m}\).
Recall that~\(\Delta _{e}\) and~\(\Delta _{i}\) are the Poincaré maps along the polycycle~\(\gamma \) and the loop~\(l\), respectively.
For \(\eps \) close to zero, we can consider the Poincaré maps~\(\Delta _{e,\eps }, \Delta _{i,\eps }\) along the broken polycycle~\(\gamma \) and the broken loop~\(l\).
Then \(\iota _n\) and~\(\eps _{k,m}\) are the unique roots of the equations
\begin{align}%
  \label{eq:in-ekm}
  \Delta _{i,\eps }^{n}(U(\eps ))&=I(\eps ) & \Delta _{e,\eps }^{-m}(S(\eps ))=E_{k}(\eps ).
\end{align}
In~\autoref{sec:asympt-sparkl-saddle}, we shall prove that in appropriate charts, the Poincaré maps~\(\Delta _{i,\eps }, \Delta _{e,\eps }\) are close to the maps~\(\Delta _{i}, \Delta _{e}\), and use this fact to estimate~\(\iota _{n}\) and~\(\eps _{k,m}\).
Namely, we shall prove the following lemma.
\begin{lemma}%
    \label{lem:estim-ekm-in}
    In the settings introduced above, we have
    \begin{align}
      \label{eq:estim-in}
      \ln(-\ln \iota _{n})&=-n\ln \Lambda _{i}+\xi _i(I)+o(1),\\
      \label{eq:estim-ekm}
      \ln(-\ln \eps _{k,m})&=m\ln \Lambda _{e}+\xi _e(E_{k})+o(1),
    \end{align}
    where~\(\xi _{e}\), \(\xi _{i}\) are the rectifying charts introduced in~\autoref{sec:coords}.
\end{lemma}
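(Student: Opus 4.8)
The plan is to derive both asymptotics from a single mechanism: in a suitable chart, the perturbed Poincaré maps $\Delta_{i,\eps}$, $\Delta_{e,\eps}$ converge $C^0$ (on compact parts of a fundamental domain, with controlled behavior near $O$) to the unperturbed maps $\Delta_i$, $\Delta_e$ as $\eps\to 0$, and the defining equations \eqref{eq:in-ekm} then locate $\iota_n$, $\eps_{k,m}$ as the points that, after $n$ (resp.\ $m$) iterations, are carried to a fixed target. I would first record, from \autoref{sec:asympt-sparkl-saddle} (to be proved there), the key closeness statement: writing everything in the chart $\xi=\ln(-\ln|x|)$, the map $\Delta_{e,\eps}$ becomes a perturbation of the shift $\xi\mapsto\xi+\ln\Lambda_e$, uniformly small as $\eps\to 0$ on the relevant range of $\xi$, and similarly $\Delta_{i,\eps}$ is a perturbation of the shift by $\ln\Lambda_i$; moreover $S(\eps)$, $U(\eps)$, $I(\eps)$, $E_k(\eps)$ all converge to $S(0)=U(0)=O$-side data, $I$, $E_k$ in this chart (with $\xi$-coordinates tending to $+\infty$ for $S(\eps),U(\eps)$ and to finite values $\xi_i(I)$, $\xi_e(E_k)$ for the targets).

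Next I would run the iteration estimate for $\eps_{k,m}$. By \autoref{lem:rect-chart}, in the rectifying chart $\xi_e$ the unperturbed map $\Delta_e$ is exactly the shift by $\ln\Lambda_e$, and $\xi_e=\ln(-\ln|x|)+o(1)$; so it suffices to prove \eqref{eq:estim-ekm} with $\ln(-\ln\eps_{k,m})$ replaced by $\xi_e(S_{\eps})$ where $S_\eps$ abbreviates the point $S(\eps)\in\Gamma_e$ (the $o(1)$ discrepancy between $\xi_e$ and $\ln(-\ln|x|)$ is harmless since $x(S(\eps))\to 0$). The equation $\Delta_{e,\eps}^{-m}(S(\eps))=E_k(\eps)$ means $S(\eps)=\Delta_{e,\eps}^{m}(E_k(\eps))$. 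Iterating the estimate $\Delta_{e,\eps}=(\text{shift by }\ln\Lambda_e)+o(1)$ a total of $m$ times starting from $E_k(\eps)$ gives $\xi_e\bigl(\Delta_{e,\eps}^m(E_k(\eps))\bigr)=\xi_e(E_k(\eps))+m\ln\Lambda_e+o(1)$, where the crucial point is that the $o(1)$ error does not accumulate: since $\Delta_e$ is an \emph{exact} shift in the $\xi_e$ chart, the difference $\Delta_{e,\eps}^m(E_k(\eps))-\Delta_e^m(E_k(\eps))$ can be bounded by a telescoping sum in which each new term is the one-step error evaluated at a point with $\xi_e$-coordinate near $\xi_e(E_k)+j\ln\Lambda_e$, and — because the relevant dynamics contracts toward $O$, i.e.\ toward $\xi_e=+\infty$, where the one-step errors are smallest — one gets a uniform bound tending to $0$ as $\eps\to 0$ (equivalently as the number of turns $\to\infty$). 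Combined with $\xi_e(E_k(\eps))=\xi_e(E_k)+o(1)$ and $\xi_e(S(\eps))=\ln(-\ln\eps_{k,m})+o(1)$ via \eqref{eq:eps}, this yields \eqref{eq:estim-ekm}. The estimate \eqref{eq:estim-in} is obtained the same way, now using $\Delta_{i,\eps}^n(U(\eps))=I(\eps)$, the rectifying chart $\xi_i$ in which $\Delta_i$ is the shift by $\ln\Lambda_i$ (note $\ln\Lambda_i<0$, so this is a shift toward $O$, consistent with $u(\eps)$ winding onto $l$), and $x(S(\eps))-x(U(\eps))=\eps$ together with $x(S(\eps))\to 0$ to replace $\xi_i(U(\eps))$ by $\ln(-\ln\eps)+o(1)$.

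The main obstacle is precisely the non-accumulation of errors over $m$ (resp.\ $n$) iterations, where $m,n\to\infty$ as $\eps\to 0$: a naive triangle-inequality bound would give an error of size $m\cdot o(1)$, which is useless. Controlling this requires the right quantitative input from \autoref{sec:asympt-sparkl-saddle} — namely that the one-step deviation $\Delta_{e,\eps}-\Delta_e$, measured in the $\xi_e$ chart, is not merely $o(1)$ but is dominated by a summable quantity along each orbit (for instance, exponentially small in the $\xi_e$-coordinate, reflecting that in the original $x$-coordinate the perturbation of the hyperbolic Poincaré map is controlled by a positive power of $|x|$, cf.\ \autoref{lem:IKS} / \cite[Corollary 3]{IKS-th1}), so that the telescoping sum is bounded by a convergent geometric series whose tail — and hence the whole sum, once the orbit has entered a small neighborhood of $O$ — tends to $0$. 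Making this summability precise and checking it is uniform in $k$ (for \eqref{eq:estim-ekm}) is the technical heart; everything else is bookkeeping with the charts $\xi_e$, $\xi_i$ and the continuous dependence of $S(\eps),U(\eps),I(\eps),E_k(\eps)$ on $\eps$.
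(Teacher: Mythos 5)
Your overall strategy --- reduce the lemma to an iteration estimate, observe that the whole difficulty is that one-step errors must not accumulate over \(m\to\infty\) iterations, and defeat the accumulation by telescoping in a chart where the unperturbed map is an exact shift --- is sound and close in spirit to what is needed. But two steps, as you state them, are genuinely gapped. First, your summability mechanism is backwards. By \autoref{lem:IKS} (specifically \(D_{\eps }\Delta _{\eps }(x)=1+o(1)\) on the region \(x\geq \eps \)), the one-step deviation of \(\Delta _{e,\eps }\) from \(\Delta _{e}\) in the \(x\)-type chart is of size \(O(\eps )\) \emph{uniformly}; since the derivative of \(\ln(-\ln x)\) is \(1/(x\ln x)\), the same deviation measured in the \(\xi _{e}\)-chart is of order \(\eps /\bigl(y\,|\ln y|\bigr)\) at an image point \(y\), which \emph{grows} (in fact doubly exponentially in \(\xi _{e}\)) as the orbit approaches the polycycle, rather than being \enquote{exponentially small near \(O\)}. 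The telescoping sum is \(o(1)\) not because of a small tail near \(O\), but because the orbit never enters the region \(x_{\eps }<\eps \), so the worst (last) terms are only \(O(1/|\ln \eps |)\), and the terms decay super-geometrically going backwards from the polycycle; this is exactly the estimate you would have to prove, and your stated heuristic would not produce it. Second, your closing identification \(\xi _{e}(S(\eps ))=\ln(-\ln \eps _{k,m})+o(1)\) \enquote{via \eqref{eq:eps}} is not justified: \eqref{eq:eps} controls only the difference \(x(S(\eps ))-x(U(\eps ))\), whereas you need \(\ln x(S(\eps ))\sim \ln \eps \), i.e.\ a comparison between the moving chart \(x_{\eps }\) (centered at \(U(\eps )\), in which \autoref{lem:IKS} holds and in which \(S(\eps )\) has coordinate exactly \(\eps \)) and the fixed chart \(x\) in which \(\xi _{e}\) lives; near the endpoint the chart \(\ln(-\ln |x|)\) magnifies \(O(\eps )\) discrepancies enormously, so this is not mere bookkeeping.

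For comparison, the paper's proof avoids both issues by never comparing perturbed and unperturbed orbits near the polycycle: everything is done in the recentred chart \(x_{\eps }\), where the relevant endpoint has coordinate exactly \(\eps \); the \(\eps \)-derivative of the \emph{full} iterate is bounded, \(0<D_{\eps }\Delta _{\eps }^{n}(x)<x^{-\Lambda '}\) (\autoref{lem:est-iter}), the Mean Value Theorem applied once at \(x=\eps =\eps _{n}\) gives \(\Delta _{0}^{n}(\eps _{n})=P(0)+O\bigl(\eps _{n}^{1-\Lambda '}\bigr)\) (\autoref{lem:est-iter:dist}, \eqref{eq:en-vs-en0}), and only then is the rectifying chart of the unperturbed map used, evaluated at the far endpoint \(P(0)\) where \(\xi \) is continuous, yielding \eqref{eq:gen:en-estim}. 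If you wish to keep your telescoping-in-\(\xi \) scheme, it can be repaired, but you must (i) replace the \enquote{errors smallest near \(O\)} claim by the bound \(|\xi (\Delta _{\eps }(y))-\xi (\Delta _{0}(y))|=O\bigl(\eps /(\Delta _{0}(y)\,|\ln \Delta _{0}(y)|)\bigr)\) on \(x_{\eps }\geq \eps \) and sum it along the orbit, and (ii) phrase the whole comparison in the recentred chart so that the endpoint sits at coordinate \(\eps \), which removes the unjustified step relating \(\xi _{e}(S(\eps ))\) to \(\ln(-\ln \eps )\). A minor slip: \(\ln \Lambda _{i}<0\) shifts \(\xi _{i}\) \emph{away} from \(O\) (the loop \(l\) repels from the interior); it is the separatrix \(i\), not \(u(\eps )\), that winds onto \(l\), and only in reverse time.
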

Though we postpone the rigorous proof of this lemma, let us explain the main idea of the proof right here.
\begin{proof}%
    [Idea of the proof of \autoref{lem:estim-ekm-in}]
    We shall only discuss the proof of~\eqref{eq:estim-in} here.
    Consider the chart~\(x_{\eps }=x(S(\eps ))-x\).
    Due~to~\eqref{eq:eps}, we have~\(x_{\eps }(U(\eps ))=\eps \).
    It turns out that for \(\eps \) close to~zero, the map~\(\Delta _{i,\eps }\) written in the chart~\(x_{\eps }\) is very close to~the~map~\(\Delta _{i}\) written in the chart~\(x_{0}=-x\), as long as \(x_{\eps }\geq \eps \).
    So, we can replace this equation with the equation
    \[
        \Delta _{i}^{n}(\iota _{n})=I.
    \]
    Now, rewriting this equation in the rectifying chart~\(\xi _{i}\) we get
    \[
        \ln(-\ln \iota _{n})+n\ln \Lambda _{i}+o(1)=\xi _{i}(I).
    \]
    Moving some terms to the right hand side of the equation, we get~\eqref{eq:estim-in}.
\end{proof}
In particular, \eqref{eq:estim-ekm} implies that for \(m\) large enough, the numbers~\(\eps _{k,m}\) are indeed ordered as~in~\eqref{eq:ekm-order}.

\subsection{Sequences close to orbits of a rotation}%
\label{sec:sequ-close-orbits}
Let us estimate the frequencies~\eqref{eq:freq-def}.
First, we prove a general lemma.
\begin{lemma}%
    \label{lem:freq}
    Let~\((J_{n})\), \(J_n\subset S^{1}={\mathbb R}/{\mathbb Z}\) be a~converging sequence of intervals, either open, closed or half-open.
    Let~\(J=[a,b]\) be its limit.
    Let~\((x_n)\) be a sequence close to an orbit of a rotation,
    \begin{align*}
        x_n&=c+n\rho +o(1)\in S^{1}, & c, \rho &\in S^{1}.
    \end{align*}
    Consider the frequency of~\(n\) such that \(x_n\in J_n\),
    \[
        \psi _{n}\coloneqq \frac 1n\#\set{j|1\leq j\leq n, x_j\in J_j}.
    \]
    Then
    \begin{itemize}
      \item for an irrational~\(\rho \), the sequence~\((\psi _{n})\) tends to \(|J|\) as~\(n\to \infty \);
      \item for a rational \(\rho =\frac{p}{q}\), \(\gcd(p, q)=1\), we have
        \begin{equation}
            \label{eq:freq-rational}
            -\frac{1}{q}+\limsup_{n\to \infty }\psi _{n}\leq |J|\leq \frac{1}{q}+\liminf_{n\to \infty }\psi _{n}.
        \end{equation}
    \end{itemize}
\end{lemma}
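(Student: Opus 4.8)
The plan is to reduce both cases to a statement about how often the sequence $(x_n)$ lands in a fixed interval, and then control the discrepancy caused by (a) the $o(1)$ error in $x_n = c + n\rho + o(1)$ and (b) the fact that $J_n$ only converges to $J$ rather than equalling it. First I would fix a small $\delta > 0$ and choose $n_0$ so large that for $n \geq n_0$ both $|x_n - (c+n\rho)| < \delta$ and $J_n$ lies within Hausdorff distance $\delta$ of $J = [a,b]$ on $S^1$. Then for $n \geq n_0$ the event $x_n \in J_n$ is sandwiched: it implies $c + n\rho \in J^{+2\delta} := [a-2\delta, b+2\delta]$ and is implied by $c + n\rho \in J^{-2\delta} := [a+2\delta, b-2\delta]$. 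Hence, writing $f_n(K) := \tfrac1n \#\{ 1 \le j \le n : c + j\rho \in K\}$ for a fixed arc $K$, we get $f_n(J^{-2\delta}) + O(n_0/n) \le \psi_n \le f_n(J^{+2\delta}) + O(n_0/n)$, so it suffices to understand $\lim$ or $\liminf/\limsup$ of $f_n(K)$ for a fixed arc $K$ and then let $\delta \to 0$.

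For irrational $\rho$, Weyl's equidistribution theorem gives $f_n(K) \to |K|$ for every arc $K$ (the endpoints have measure zero, so openness/closedness is irrelevant). Therefore $\limsup \psi_n \le |J^{+2\delta}| = |J| + 4\delta$ and $\liminf \psi_n \ge |J^{-2\delta}| = |J| - 4\delta$ (when $|J| > 4\delta$; the case $|J| = 0$ is handled by the upper bound alone). Letting $\delta \to 0$ yields $\psi_n \to |J|$.

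For rational $\rho = p/q$ with $\gcd(p,q)=1$, the points $c, c+\rho, \dots, c+(q-1)\rho$ are exactly the $q$ points of the coset $c + \tfrac1q\mathbb Z$, each hit with frequency $1/q$ as $n \to \infty$. So for a fixed arc $K$, $f_n(K) \to \tfrac1q \#\big( (c + \tfrac1q\mathbb Z) \cap K \big)$, and this count satisfies $q|K| - 1 \le \#\big((c+\tfrac1q\mathbb Z)\cap K\big) \le q|K| + 1$, i.e. $|K| - \tfrac1q \le \lim f_n(K) \le |K| + \tfrac1q$. Combining with the sandwich above: $\limsup \psi_n \le |J^{+2\delta}| + \tfrac1q = |J| + 4\delta + \tfrac1q$ and $\liminf \psi_n \ge |J^{-2\delta}| - \tfrac1q \ge |J| - 4\delta - \tfrac1q$. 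Letting $\delta \to 0$ gives $\limsup \psi_n - \tfrac1q \le |J| \le \liminf \psi_n + \tfrac1q$, which is \eqref{eq:freq-rational}.

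The step I expect to require the most care is the sandwich itself: one must check that passing from "$x_n \in J_n$" to "$c + n\rho$ in a slightly enlarged/shrunk copy of $J$" is valid uniformly, which uses the triangle inequality on $S^1$ together with the two convergence facts, and that the boundary cases — $J$ degenerate to a point, or $a$ and $b$ wrapping around so $J$ is nearly all of $S^1$ — do not break the inequalities. The $O(n_0/n)$ terms from the finitely many early indices are harmless since we take $n \to \infty$ before $\delta \to 0$; making the order of limits explicit is the one bookkeeping point to state cleanly.
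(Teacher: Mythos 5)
Your proposal is correct and follows essentially the same route as the paper: sandwich the event \(x_j\in J_j\) between membership of \(c+j\rho\) in a slightly shrunk and a slightly enlarged copy of \(J\), then use equidistribution in the irrational case and, in the rational case, periodicity of the orbit (the coset \(c+\tfrac1q\mathbb{Z}\)) together with the bound that an arc of length \(l\) contains between \(ql-1\) and \(ql+1\) of its points, finally letting \(\delta\to 0\). The only cosmetic difference is the order in which you apply the \(ql\pm1\) count and the limit \(\delta\to0\), which does not affect the result.
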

\begin{proof}
    Given \(\delta >0\), for \(j\) large enough, \(J_{j}\) is close to~\(J\) and \(x_{j}\) is close to~\(c+j\rho \), hence
    \begin{align*}
      c+j\rho \in [a+\delta ,b-\delta ]&\Rightarrow x_{j}\in (a, b), & x_{j}\in [a, b]&\Rightarrow c+j\rho \in [a-\delta ,b+\delta ],
    \end{align*}
    thus
    \begin{subequations}%
        \label{eq:freq-nbhd}
        \begin{align}
          \lim_{n\to \infty }\frac{1}{n}\#\set{j|1\leq j\leq n, c+j\rho \in [a+\delta , b-\delta ]}&\leq \liminf_{n\to \infty }\psi _{n},\\
          \lim_{n\to \infty }\frac{1}{n}\#\set{j|1\leq j\leq n, c+j\rho \in [a-\delta , b+\delta ]}&\geq \limsup_{n\to \infty }\psi _{n}.
        \end{align}
    \end{subequations}

    In the case of an irrational~\(\rho \), the orbit \((c+j\rho )\) is uniformly distributed on the circle, hence~\eqref{eq:freq-nbhd} implies
    \[
        b-a-2\delta \leq \liminf_{n\to \infty }\psi _{n}\leq \limsup_{n\to \infty }\psi _{n}\leq b-a+2\delta .
    \]
    Since this holds for any~\(\delta >0\), the sequence~\(\psi _{n}\) converges to~\(|J|=b-a\).

    In the case of a rational~\(\rho \), the orbit~\(c+j\rho \) of the shift \(x\mapsto x+\rho \) is periodic with period~\(q\), hence it visits any interval with the frequency equal to the fraction of the points~\(c+\rho , \dots , c+q\rho \) that belong to this interval.
    Hence~\eqref{eq:freq-nbhd} takes the form
    \begin{align*}
      \frac{1}{q}\#\set{j|1\leq j\leq q, c+j\rho \in [a+\delta , b-\delta ]}&\leq \liminf_{n\to \infty }\psi _{n},\\
      \frac{1}{q}\#\set{j|1\leq j\leq q, c+j\rho \in [a-\delta , b+\delta ]}&\geq \limsup_{n\to \infty }\psi _{n}.
    \end{align*}
    Since this holds for any~\(\delta >0\), we have
    \begin{align*}
      \frac{1}{q}\#\set{j|1\leq j\leq q, c+j\rho \in (a, b)}&\leq \liminf_{n\to \infty }\psi _{n},\\
      \frac{1}{q}\#\set{j|1\leq j\leq q, c+j\rho \in [a, b]}&\geq \limsup_{n\to \infty }\psi _{n}.
    \end{align*}
    Finally, an interval of~length~\(l\) contains at least~\(ql-1\) and at most~\(ql+1\) of the points~\(c+\rho , \dots , c+q\rho \), hence we have~\eqref{eq:freq-rational}.
\end{proof}
Now, apply this lemma to our case.
\begin{corollary}%
    \label{cor:freq}
    If~\(\varphi (v_{0})\) is irrational, then
    \begin{equation}
        \label{eq:freq-TH:irrational}
        \lim_{\eps \to 0}\psi _{\eps ,k}=\frac{\Phi _{k}(v_{0})}{\varphi (v_{0})}.
    \end{equation}
    If~\(\varphi (v_{0})\) is a rational number with denominator~\(q\), then
    \begin{equation}
        \label{eq:freq-TH:rational}
        -\frac{1}{q}+\varphi (v_{0})\limsup_{\eps \to 0}\psi _{\eps ,k}\leq \Phi _{k}(v_{0})\leq \frac{1}{q}+\varphi (v_{0})\liminf_{\eps \to 0}\psi _{\eps ,k}.
    \end{equation}
\end{corollary}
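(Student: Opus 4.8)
The plan is to deduce \autoref{cor:freq} from \autoref{lem:freq} and \autoref{lem:estim-ekm-in} by rewriting each frequency \(\psi_{\eps,k}\), see~\eqref{eq:freq-def}, as a frequency of the type treated in \autoref{lem:freq}. Work on the circle \(S=\mathbb R/(\ln\Lambda_e)\mathbb Z\), identified with \(\mathbb R/\mathbb Z\) by dividing by \(\ln\Lambda_e\); let \(\pi\colon\mathbb R\to S\) be the projection and \(g(t)=\ln(-\ln t)\), a strictly decreasing function on \((0,1)\). Set \(x_n=\pi(g(\iota_n))\). By~\eqref{eq:estim-in} and the definition~\eqref{eq:phi} of \(\varphi\), in the rescaled coordinate \(x_n=c+n\rho+o(1)\) with \(c=\xi_i(I)/\ln\Lambda_e\) and \(\rho\equiv -\ln\Lambda_i/\ln\Lambda_e=1/\varphi(v_0)\pmod 1\), so \((x_n)\) is close to an orbit of the rotation by \(\rho\).

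First I would pin down the relevant intervals. Since \(g\) is decreasing, \(\eps_{k',m'}>\iota_n\) is equivalent to \(g(\eps_{k',m'})<g(\iota_n)\), and by~\eqref{eq:ekm-order} the numbers \(g(\eps_{k',m'})\) increase along the order \((1,m'),(2,m'),\dots,(N,m'),(1,m'+1),\dots\) (once \(m'\) is large, where~\eqref{eq:ekm-order} holds). Hence the smallest \(\eps_{k',m'}\) exceeding \(\iota_n\), which is \(\eps_{k_n,m_n}\) by definition, is the one with the largest \(g\)-value below \(g(\iota_n)\), and the next number up in the list is \(g(\eps_{k_n+1,m_n})\), with the convention \(\eps_{N+1,m}:=\eps_{1,m+1}\). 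Therefore, for \(n\) large, \(k_n=k\) if and only if
\[
g(\iota_n)\in\bigl(g(\eps_{k,m_n}),\ g(\eps_{k+1,m_n})\bigr],
\]
equivalently (up to boundary effects of size \(o(1)\), which are absorbed by the \(\delta\)-neighbourhood argument in \autoref{lem:freq}) \(x_n\in J_n\), where \(J_n:=\pi\bigl((g(\eps_{k,m_n}),g(\eps_{k+1,m_n})]\bigr)\).

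Next I would take limits. Since \(\iota_n\searrow 0\), the neighbouring values \(\eps_{k,m_n}\) also tend to \(0\) (cf.~\cite[Lemma~1]{IKS-th1}), forcing \(m_n\to\infty\); hence, by~\eqref{eq:estim-ekm}, the intervals \(J_n\) converge in the rescaled coordinate to \(J^{(k)}:=\pi\bigl([\xi_e(E_k),\xi_e(E_{k+1})]\bigr)\), whose length is, by~\eqref{eq:Phi-k},
\[
|J^{(k)}|=\frac{\xi_e(E_{k+1})-\xi_e(E_k)}{\ln\Lambda_e}=\frac{\Phi_k(v_0)}{\varphi(v_0)}
\]
(for \(k=N\) one uses \(\xi_e(E_{N+1})=\xi_e(E_1)+\ln\Lambda_e\), immediate from \autoref{lem:rect-chart}). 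Since \(\iota_n\) is eventually strictly decreasing with limit \(0\), for small \(\eps\) the set \(\{n:\iota_n>\eps\}\) is an initial segment \(\{1,\dots,n(\eps)\}\) with \(n(\eps)\to\infty\), and every large integer occurs as some \(n(\eps)\); consequently \(\psi_{\eps,k}\) equals the frequency \(\psi_{n(\eps)}\) of \autoref{lem:freq} (up to \(O(1/n(\eps))\) from the finitely many small \(n\)), so the \(\limsup\), \(\liminf\) and \(\lim\) over \(\eps\to 0\) agree with those over \(n\to\infty\). Applying \autoref{lem:freq}: if \(\varphi(v_0)\) is irrational then so is \(\rho=1/\varphi(v_0)\), and we get \(\psi_{\eps,k}\to|J^{(k)}|=\Phi_k(v_0)/\varphi(v_0)\), which is~\eqref{eq:freq-TH:irrational}; if \(\varphi(v_0)=p/q\) with \(\gcd(p,q)=1\), then \(\rho=q/p\) is in lowest terms with denominator \(p\), \autoref{lem:freq} gives \(-\frac1p+\limsup_{\eps\to0}\psi_{\eps,k}\le\Phi_k(v_0)/\varphi(v_0)\le\frac1p+\liminf_{\eps\to0}\psi_{\eps,k}\), and multiplying by \(\varphi(v_0)=p/q\) and using \(\varphi(v_0)/p=1/q\) yields~\eqref{eq:freq-TH:rational}.

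The main obstacle is the identification of the second paragraph: reconciling the global ordering~\eqref{eq:ekm-order} of the \(\eps_{k,m}\) with the non-monotone chart \(g=\ln(-\ln\cdot)\), and checking that \(m_n\to\infty\) so that the \(o(1)\)-terms of \autoref{lem:estim-ekm-in} are actually small along the subsequence \(\eps_{k,m_n}\) neighbouring \(\iota_n\). Everything else — rescaling the circle by \(\ln\Lambda_e\), passing from limits over \(n\) to limits over \(\eps\to0\), and the arithmetic \(\varphi(v_0)/p=1/q\) — is routine.
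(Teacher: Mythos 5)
Your proposal is correct and follows essentially the same route as the paper: rewrite \(k_n=k\) as \(\ln(-\ln\iota_n)\) lying in the interval with endpoints \(\ln(-\ln\eps_{k,m_n})\), \(\ln(-\ln\eps_{k+1,m_n})\), rescale by \(\ln\Lambda_e\), use \autoref{lem:estim-ekm-in} to see a sequence close to a rotation orbit with \(\rho=1/\varphi(v_0)\) and intervals converging to arcs of length \(\Phi_k(v_0)/\varphi(v_0)\), then apply \autoref{lem:freq}. You are merely more explicit than the paper about the bookkeeping (the reversal under \(\ln(-\ln\cdot)\), \(m_n\to\infty\), passing from \(n\to\infty\) to \(\eps\to0\), and the fact that \(\rho\) has denominator \(p\) so that multiplying by \(\varphi(v_0)=p/q\) turns \(1/p\) into \(1/q\)), all of which is handled correctly.
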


\begin{proof}
    Note that
    \begin{align*}
      k_{n} & =k &  & \iff  & \iota _{n}             & \in \left[\eps _{k+1,m_{n}}, \eps _{k,m_{n}}\right) \\
      \shortintertext{thus}
      k_{n} & =k &  & \iff  & \ln(-\ln \iota _{n}) & \in \left(\ln(-\ln \eps _{k,m_{n}}), \ln(-\ln \eps _{k+1,m_{n}})\right].
    \end{align*}
    Due to \autoref{lem:estim-ekm-in}, we have
    \begin{align*}
      \frac{1}{\ln \Lambda _{e}}\ln(-\ln \iota _{n})&=n\frac{-\ln \Lambda _{i}}{\ln \Lambda _{e}}+\frac{\xi _{i}(I)}{\ln \Lambda _{e}}+o(1),\\
      \frac{1}{\ln \Lambda _{e}}\ln(-\ln \eps _{k,m_{n}})&=\frac{\xi _{e}(E_{k})}{\ln \Lambda _{e}}+o(1)\pmod {\mathbb Z},\\
      \frac{1}{\ln \Lambda _{e}}\ln(-\ln \eps _{k+1,m_{n}})&=\frac{\xi _{e}(E_{k+1})}{\ln \Lambda _{e}}+o(1)\pmod {\mathbb Z}.
    \end{align*}
    Application of \autoref{lem:freq} completes the proof.
\end{proof}

\subsection{Two equivalent families}%
\label{sec:two-equiv-fam}
Now we are ready to prove \autoref{thm:main}.
Consider two moderately topologically equivalent families \(V, {\tilde V}\in {\mathbf{TH}}_N\).
As was noted in~\autoref{sec:polycycle-tear-heart}, the arguments from~\cite{IKS-th1} imply that~\(\varphi (v)=\varphi ({\tilde v})\).
So, it is enough to prove that we have~\(\Phi (v)=\Phi ({\tilde v})\) for an irrational~\(\varphi (v)\), and \(\left|\Phi _{k}(v)-\Phi _{k}({\tilde v})\right|\leq \frac{2}{q}\) for a rational~\(\varphi (v)=\frac{p}{q}\).

We shall use the already introduced notation (e.g., \({\mathcal E}_+\), \(\eps \), \(\iota _n\), \(\eps _{k,m}\)) for objects corresponding to~\(V\), while for similar objects corresponding to~\({\tilde V}\) we shall add tilde above.
Let \(H:(\alpha , x)\mapsto (h(\alpha ), H_\alpha (x))\) be the map implementing the equivalence of~\(V\) and~\({\tilde V}\).

As in~\cite[Sec. 2.3.3]{IKS-th1}, \(h\) sends \(({\mathcal E}, 0)\) to \(({\tilde {\mathcal E}}, 0)\), \({\mathcal E}_+\) to \({\tilde {\mathcal E}}_+\), a germ of the sequence~\((\iota _n)\) to a germ of the sequence~\(({\tilde \iota }_n)\), \(h(\iota _{n})={\tilde \iota }_{n+a}\), \(a\in {\mathbb Z}\), and a germ of the sequence~\((\eps _{k,m})\) to a germ of the sequence~\(({\tilde \eps }_{k,m})\).

Since the separatrix~\(e_1\) is topologically distinguished for vector fields from~\({\mathbf{TH}}_{N}\), \autoref{rem:moderate-corr} implies that \(H_{\eps }\) sends \(e_{1}(\eps )\) to \({\tilde e}_{1}(h(\eps ))\).
Thus we can distinguish~\(\eps _{1,m}\) from~\(\eps _{k,m}\), \(k\neq 1\), hence \(h\) sends \((\eps _{1,m})\) to~\(({\tilde \eps }_{1,{\tilde m}})\).
Next, recall that both sequences~\(\eps _{k,m}\) and~\({\tilde \eps }_{k,{\tilde m}}\) are ordered in the same way, see~\eqref{eq:ekm-order}, hence~\(h\) sends each~\(\eps _{k,m}\) to~\({\tilde \eps }_{k,m+b}\), where \(b\) is an integer constant.

Recall that \(\iota _{n}\) belongs to~\(\left[\eps _{k_{n}+1,m_{n}}, \eps _{k_{n},m_{n}}\right)\).
Since \(h(\iota _{n})={\tilde \iota }_{n+a}\) and \(h(\eps _{k,m})={\tilde \eps }_{k,m+b}\), we have \({\tilde \iota }_{n+a}\in \left[{\tilde \eps }_{k_{n}+1,m_{n}+b}, {\tilde \eps }_{k_{n},m_{n}+b}\right)\).
Thus \({\tilde k}_{n+a}=k_{n}\), hence
\begin{align*}
  \liminf_{\eps \to 0}\psi _{\eps ,k}&=\liminf_{\eps \to 0}{\tilde \psi }_{\eps ,k}, & \limsup_{\eps \to 0}\psi _{\eps ,k}&=\limsup_{\eps \to 0}{\tilde \psi }_{\eps ,k}.
\end{align*}
Finally, application of~\autoref{cor:freq} to both families~\(V, {\tilde V}\) completes the proof of~\autoref{thm:main}.
\section{Asymptotics of sparkling saddle connections}%
\label{sec:asympt-sparkl-saddle}
\subsection{General settings}%
\label{sec:general-settings}
In order to deal with both cases simultaneously, let us introduce notation according to \autoref{tab:notation}.
\begin{table}[h]
    \centering
    \begin{tabular}{ccc}
      \toprule
      \multirow{2}{*}{Notation} & \multicolumn{2}{c}{Meaning}       \\\cmidrule{2-3}
                                & Case~\(\eps _{k,m}\) & Case~\(\iota _{n}\) \\\midrule
      \(n\)                     & \(m\)            & \(n\)          \\
      \(\eps _{n}\)                 & \(\eps _{k,m}\)      & \(\iota _{n}\)      \\
      \(\Delta _{\eps }\)                 & \(\Delta _{e,\eps }^{-1}\) & \(\Delta _{i,\eps }\)    \\
      \(\Lambda \)                     & \(\Lambda _{e}^{-1}\)   & \(\Lambda _{i}\)      \\
      \(x_{\eps }\)                 & \(x-x(U(\eps ))\)    & \(x(S(\eps ))-x\)  \\
      \(P(\eps )\)                  & \(E_{k}(\eps )\)     & \(I(\eps )\)       \\
      \bottomrule
    \end{tabular}
    \caption{Notation unifying the cases of interior and exterior separatrix connections}%
    \label{tab:notation}
\end{table}

In this notation, both equations~\eqref{eq:in-ekm} take the form
\begin{equation}
    \label{eq:in-gen}
    \Delta _{\eps }^{n}(\eps )=P(\eps ).
\end{equation}

Our estimates will be based on the following lemma.
This lemma is a simple corollary of~\cites[Lemma 6]{IKS-th1};
see also~\cite[Theoreme 1]{Mo90} for much stronger estimates in case of infinitely smooth families, and~\cite[Sec 9.3]{HW-ode} for an alternative proof of~\eqref{eq:IKS:Delta}.
\begin{lemma}%
    \label{lem:IKS}
    In both cases described above, we have~\(\Lambda <1\), and
    \begin{subequations}%
        \label{eq:IKS}
        \begin{align}%
          \label{eq:IKS:Delta}
          \Delta _\eps (x)        & =x^{\Lambda }e^{O(1)};   \\
          \label{eq:IKS:Dx-Delta}
          D_x\Delta _\eps (x)     & =x^{\Lambda -1}e^{O(1)}; \\
          \label{eq:IKS:Deps-Delta}
          D_{\eps }\Delta _{\eps }(x) & = 1+o(1);
        \end{align}
    \end{subequations}
    as \((\eps , x)\) tends to the origin inside the angle~\(x\geq \eps \geq 0\).
\end{lemma}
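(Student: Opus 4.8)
The plan is to reduce the three estimates to the single‑corner estimates of \cite[Lemma 6]{IKS-th1}, which control a Dulac (corner) map of a hyperbolic saddle in a smooth family together with its derivatives in the phase and the parameter variable. The inequality $\Lambda<1$ is immediate from \eqref{eq:l-m-ineq} and Table~\ref{tab:notation}: in the case $\iota_n$ one has $\Lambda=\Lambda_i=\lambda<1$, and in the case $\eps_{k,m}$ one has $\Lambda=\Lambda_e^{-1}=(\lambda^2\mu)^{-1}<1$ since $\Lambda_e>1$.

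\emph{Decomposition.} For $\eps$ near $0$ I would write $\Delta_{i,\eps}$ as a composition of the corner map of $L(\eps)$ with one regular transition map, and $\Delta_{e,\eps}$ as a composition of the corner maps of $L(\eps)$ (twice) and $M(\eps)$ (once) interspersed with regular transitions; a polycycle of type $TH$ runs through $L$ twice and through $M$ once, so the total exponents are $\lambda(\eps)\to\Lambda_i$ and $\lambda(\eps)^2\mu(\eps)\to\Lambda_e$, where $\lambda(\eps),\mu(\eps)$ are the characteristic numbers of $L(\eps),M(\eps)$. In the chart $x_\eps$ of Table~\ref{tab:notation} the points $S(\eps),U(\eps)$ where the loop separatrices meet $\Gamma$ sit at $x_\eps=0$ and $x_\eps=\eps$, so the corner of $L(\eps)$ is entered along one coordinate axis and left along the other, while the regular transitions are $\eps$-families of germs of diffeomorphisms fixing $0$ with derivative bounded away from $0$ and $\infty$ uniformly in $\eps$. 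The angle $x\ge\eps\ge 0$ is exactly the set on which this composition is defined for the broken polycycle, and since $\Lambda<1$ it is forward invariant under $\Delta_\eps$, which is all that is needed in~\eqref{eq:in-gen}.

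\emph{Assembling the estimates.} Applying \cite[Lemma 6]{IKS-th1} to each corner factor and folding the $e^{O(1)}$ contributions of the regular transitions into the constants, I expect to obtain, uniformly on $x\ge\eps\ge 0$, the normal form
\[
 \Delta_\eps(x)=\eps+C(\eps)\,x^{\Lambda}e^{o(1)},\qquad C(\eps)\in e^{O(1)},
\]
where the $e^{o(1)}$ stands for a factor $1+o(1)$ whose derivatives in $x$ and $\eps$ are also controlled by \cite[Lemma 6]{IKS-th1}; in the case $\eps_{k,m}$ this is obtained by first writing $\Delta_{e,\eps}(x)=c_e(\eps)(x-\eps)^{\Lambda_e}e^{o(1)}$ and then inverting, using $\Lambda\Lambda_e=1$, and the summand $\eps$ records that a point just off the broken loop returns near the intersection point ($U(\eps)$ or $S(\eps)$) sitting at $x_\eps=\eps$. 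Since $0\le\eps\le x\le x^{\Lambda}$ for $x$ small, the normal form gives \eqref{eq:IKS:Delta}; differentiating in $x$ gives $D_x\Delta_\eps(x)=x^{\Lambda-1}e^{O(1)}$, i.e. \eqref{eq:IKS:Dx-Delta}; and differentiating in $\eps$, the summand $\eps$ contributes $1$ while every remaining term is a bounded factor times one of $x^{\Lambda}$, $x^{\Lambda}\ln x$, or the $\eps$-derivative of the $e^{o(1)}$ factor, all of which are $o(1)$ on the angle $x\ge\eps\ge 0$ because $\Lambda>0$; this yields $D_\eps\Delta_\eps(x)=1+o(1)$, i.e. \eqref{eq:IKS:Deps-Delta}.

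\emph{Main obstacle.} The real work is the bookkeeping for the $x$- and $\eps$-derivatives of the \emph{composition} defining $\Delta_{e,\eps}$: the intermediate points tend to the origin at different rates, and the $x$-derivatives of the individual corner factors blow up. The point is that every \enquote{tail product} that multiplies a $D_\eps$-factor after one application of the chain rule has the form $x^{\nu}e^{O(1)}$ with $\nu>0$ --- for $\Delta_{e,\eps}$ this uses $\Lambda_e-1>0$ and $\lambda\mu>1$, the latter following from $\lambda^2\mu>1$ together with $\lambda<1$ --- while each corner's $D_\eps$-factor is itself $o(1)$ in the chart in which the separatrices are the coordinate axes. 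This is precisely the content that \cite[Lemma 6]{IKS-th1} is built to supply; alternatively one could invoke the sharper estimates of \cite[Theoreme 1]{Mo90}, and \eqref{eq:IKS:Delta} alone also follows from \cite[Sec 9.3]{HW-ode}.
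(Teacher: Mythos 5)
You take a genuinely different, and much longer, route than the paper, and you mis-locate the cited result along the way. The paper's proof is essentially a citation: \cite[Lemma 6]{IKS-th1} is not a single-corner (Dulac map) statement — it already gives these estimates for the Poincaré map along a broken monodromic polycycle whose product of characteristic numbers is less than one, in precisely the charts \(x_{\eps }\) of \autoref{tab:notation}. Concretely, \eqref{eq:IKS:Delta} is Eq.~(35a) there, \eqref{eq:IKS:Dx-Delta} follows from the intermediate estimates \(D_{x}\Delta _{\eps }(x)=x^{\Lambda (\eps )-1}e^{O(1)}\) and \(x^{\Lambda (\eps )}=x^{\Lambda (0)}e^{O(1)}\) inside that proof, and \eqref{eq:IKS:Deps-Delta} is stated explicitly at the top of the proof of Eq.~(35c); the only thing left to check here is \(\Lambda <1\), which you derive correctly from \eqref{eq:l-m-ineq} (in the case \(\eps _{k,m}\) one applies the quoted lemma to the polycycle traversed backwards, with product \(\Lambda _{e}^{-1}\)). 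Your decomposition into corner maps and regular transitions therefore re-proves the quoted lemma rather than uses it; that is legitimate in principle, but then the parametric corner estimates must be sourced elsewhere (\cite{Mo90}, or the machinery inside the proof in \cite{IKS-th1}), and all the technical content of the lemma is back on your shoulders.

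In the one place where you commit to that content — the chain-rule bookkeeping behind \eqref{eq:IKS:Deps-Delta} — the stated mechanism is not correct. It is not true that every tail product multiplying a \(D_{\eps }\)-factor is \(x^{\nu }e^{O(1)}\) with \(\nu >0\): for \(\Delta _{i,\eps }\) the tail following the entry transition is the corner at \(L\), with derivative \(\asymp y^{\lambda -1}\) and \(\lambda -1<0\); for \(\Delta _{e,\eps }\) the corners occur in the order \(L,M,L\), so the tail after the passage by \(M\) again has exponent \(\lambda -1<0\). Nor is \enquote{each corner's \(D_{\eps }\)-factor is \(o(1)\)} enough against such a blow-up. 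What actually saves these terms is quantitative: the transitions are anchored on the separatrices, so their \(\eps \)-derivatives are \(O(y)\) at the relevant points, and a corner's \(\eps \)-derivative carries a factor \(w^{\lambda }\) (resp.\ \(w^{\mu }\)) up to logarithms at its entry point \(w\); multiplied against the possibly blowing-up tail derivatives, the exponents telescope to the positive totals \(\lambda \), resp.\ \(\Lambda _{e}\), so neither \(\lambda \mu >1\) nor \(\Lambda _{e}>1\) is what makes these cross terms small. Moreover, in the case \(\eps _{k,m}\) you still must transfer the estimates from \(\Delta _{e,\eps }\) to \(\Delta _{e,\eps }^{-1}\): since \(D_{\eps }\Delta _{e,\eps }^{-1}=-D_{\eps }\Delta _{e,\eps }/D_{x}\Delta _{e,\eps }\), getting \(1+o(1)\) requires the leading \((x-\eps )^{\Lambda _{e}-1}\) terms of numerator and denominator to cancel exactly, which is where the normalization \eqref{eq:eps} of the splitting parameter enters; your sketch asserts the conclusion without exhibiting this cancellation. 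None of this is unfixable, but as written the proof of \eqref{eq:IKS:Deps-Delta} — the only delicate clause of the lemma — rests on an incorrect intermediate claim, whereas the paper gets it directly from the citation.
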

\begin{proof}
    The inequality~\(\Lambda <1\) follows from~\eqref{eq:l-m-ineq}.
    The estimates~\eqref{eq:IKS} were proved in~\cite[Proof of Lemma 6]{IKS-th1} for any monodromic polycycle with the product~\(\Lambda \) of characteristic numbers along the polycycle being less than one.

    More precisely,
    \begin{description}
      \item[\eqref{eq:IKS:Delta}] is exactly~\cite[Eq. 35a]{IKS-th1};
      \item[\eqref{eq:IKS:Dx-Delta}] immediately follows from the estimate \(D_{x}\Delta _{\eps }(x)=x^{\Lambda (\eps )-1}e^{O(1)}\) from the proof of~\cite[Lemma 6, Eq. 35b]{IKS-th1}, and the estimate~\(x^{\Lambda (\eps )}=x^{\Lambda (0)}e^{O(1)}\) from the proof of~\cite[Lemma 6, Eq. 35a]{IKS-th1};
      \item[\eqref{eq:IKS:Deps-Delta}] is stated explicitly at the top of the proof of~\cite[Lemma 6, Eq. 35c]{IKS-th1}.
    \end{description}
\end{proof}
Now \autoref{lem:rect-chart} follows from \autoref{lem:gen:rect-chart} below applied to~\(\Delta ^{-1}\), and \autoref{lem:estim-ekm-in} follows from \autoref{lem:gen:estim} below.
\begin{lemma}%
    \label{lem:gen:rect-chart}
    Consider a~map \(\Delta :(0, \delta )\to (0, +\infty )\) satisfying~\eqref{eq:IKS:Delta} with some~\(\Lambda >1\).
    Suppose that\footnote{For \(\delta \) small enough, this assumption immediately follows from~\eqref{eq:IKS:Delta}.} \(\Delta ^{n}(x)\to 0\) for all \(0<x<\delta \).
    Then there exists a unique continuous map~\(\xi :(0, \delta )\to (0, +\infty )\) such that
    \begin{itemize}
      \item \(\xi (\Delta (x))=\xi (x)+\ln \Lambda \);
      \item \(\xi (x)=\ln(-\ln x)+O\left(\frac{1}{-\ln x}\right)\) as~\(x\to 0+\).
    \end{itemize}
\end{lemma}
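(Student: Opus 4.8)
The plan is to conjugate $\Delta$ to the translation by $\ln\Lambda$ after passing to the chart $\phi(x)=\ln(-\ln x)$, in which (by \eqref{eq:IKS:Delta}) $\Delta$ becomes a small perturbation of $\eta\mapsto\eta+\ln\Lambda$, and to write down the conjugacy as an explicit telescoping series.

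First I would shrink $\delta$ so that \eqref{eq:IKS:Delta} holds on all of $(0,\delta)$ and $\Delta\bigl((0,\delta)\bigr)\subset(0,\delta)$ with $\Delta(x)<x$ there; by the footnote this is harmless, and it makes $\Delta^n(x)\to 0$ automatic. The estimate \eqref{eq:IKS:Delta}, which here reads $\ln\Delta(x)=\Lambda\ln x+O(1)$, gives $-\ln\Delta(x)=\Lambda(-\ln x)\bigl(1+O(1/(-\ln x))\bigr)$, hence
\[
  \beta(x):=\phi(\Delta(x))-\phi(x)-\ln\Lambda=O\!\left(\frac{1}{-\ln x}\right)=O\bigl(e^{-\phi(x)}\bigr).
\]
Looking for $\xi$ in the form $\xi=\phi+u$, the functional equation $\xi\circ\Delta=\xi+\ln\Lambda$ becomes $u(x)-u(\Delta(x))=\beta(x)$, whose formal solution is the telescoping series $u(x)=\sum_{j\ge 0}\beta(\Delta^j(x))$.

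The key step, which I expect to be the main obstacle, is to prove that this series converges — uniformly on $(0,\delta)$, to a continuous function — with $u(x)=O(1/(-\ln x))$. For this I would iterate the lower bound $-\ln\Delta(y)\ge\Lambda(-\ln y)-C$ coming from \eqref{eq:IKS:Delta}: an elementary induction gives $-\ln\Delta^j(x)\ge\tfrac12\Lambda^j(-\ln x)$ once $-\ln x$ is large enough, hence $\phi(\Delta^j(x))\ge\phi(x)+j\ln\Lambda-\ln 2$ and therefore $|\beta(\Delta^j(x))|\le \mathrm{const}\cdot\Lambda^{-j}/(-\ln x)$. Summing the geometric series shows $u$ is well defined, and the Weierstrass $M$-test (the bound is summable and uniform in $x$) gives continuity, together with $u(x)=O(1/(-\ln x))$; here I use that $\Delta$, hence each $\Delta^j$, is continuous. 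Then $\xi=\phi+u$ is continuous, satisfies the asymptotics $\xi(x)=\ln(-\ln x)+O(1/(-\ln x))$, and a term-by-term telescoping check gives $\xi(\Delta(x))=\phi(\Delta(x))+u(\Delta(x))=\bigl(\phi(x)+\ln\Lambda+\beta(x)\bigr)+\bigl(u(x)-\beta(x)\bigr)=\xi(x)+\ln\Lambda$; shrinking $\delta$ once more so that $(-\ln\delta)\ln(-\ln\delta)$ beats the constant in the bound for $u$, we also get $\xi>0$ on $(0,\delta)$ and $\xi\to+\infty$ as $x\to 0+$.

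It remains to check uniqueness. If $\xi_1,\xi_2$ both satisfy the conclusions, then $\theta=\xi_1-\xi_2$ is $\Delta$-invariant, $\theta\circ\Delta=\theta$, and $\theta(x)=O(1/(-\ln x))\to 0$ as $x\to 0+$; iterating the invariance, $\theta(x)=\theta(\Delta^n(x))\to 0$ since $\Delta^n(x)\to 0$, so $\theta\equiv 0$. Finally, should one want $\xi$ on the full original interval rather than a shrunk one, set $\xi(x):=\xi(\Delta^n(x))-n\ln\Lambda$ for any $n$ with $\Delta^n(x)$ in the small interval; this is well defined by the functional equation, continuous because each $\Delta^n$ is continuous and carries a neighbourhood of a given point into the small interval, and it automatically inherits the functional equation.
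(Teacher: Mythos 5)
Your proposal is correct and follows essentially the same route as the paper: pass to the chart \(\ln(-\ln x)\), where \eqref{eq:IKS:Delta} shows \(\Delta \) is a translation by \(\ln \Lambda \) up to an error \(O\left(\frac{1}{-\ln x}\right)\), sum the telescoping series (your \(u=\sum _j\beta \circ \Delta ^j\) is exactly the paper's \(\sum _n(\xi _{n+1}-\xi _n)\) with \(\xi _n(x)=\ln(-\ln \Delta ^n(x))-n\ln \Lambda \)), bound it by a geometric series using the iterated lower bound on \(-\ln \Delta ^j(x)\), and get uniqueness from the functional equation plus \(\Delta ^n(x)\to 0\). The only differences are cosmetic (your decay rate \(\Lambda ^{-j}\) versus the paper's \(\sqrt{\Lambda }^{-n}\), and your explicit extension from a shrunk interval, which matches the paper's extension to the attraction basin).
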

In case of infinitely smooth vector fields, a similar statement was proved in~\cite{DR90}.

\begin{lemma}%
    \label{lem:gen:estim}
    Consider a family of maps~\(\Delta _{\eps }:(0, \delta )\to (0, +\infty )\) satisfying the conclusions of~\autoref{lem:IKS}.
    Let~\(P\) be a smooth function defined in a small neighborhood of~\(0\).
    Suppose that \(P(0)\) belongs to the attraction basin of the origin with respect to~\(\Delta _{0}^{-1}\).
    Then for \(n\) large enough, the equation~\eqref{eq:in-gen} has a unique solution~\(\eps _{n}\), and
    \begin{equation}
        \label{eq:gen:en-estim}
        \ln(-\ln \eps _{n})=-n\ln \Lambda +\xi (P(0))+o(1)
    \end{equation}
    as \(n\to \infty \).
\end{lemma}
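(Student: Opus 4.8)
The plan is to study the equation $\Delta_\eps^n(\eps) = P(\eps)$ by transporting everything into the rectifying chart $\xi$ of $\Delta_0^{-1}$ (equivalently, of $\Delta_0$, since $\xi$ conjugates $\Delta_0$ to the shift by $\ln\Lambda$) and controlling the error introduced by replacing $\Delta_\eps$ with $\Delta_0$ along the orbit. First I would note that, by \eqref{eq:IKS:Deps-Delta}, the family $\Delta_\eps$ is Lipschitz in $\eps$ with constant $1+o(1)$ uniformly on the angle $x \ge \eps \ge 0$, so that $\Delta_\eps(x) = \Delta_0(x) + O(\eps)$ there; iterating, and using the strong contraction $\Delta_\eps(x) = x^\Lambda e^{O(1)}$ with $\Lambda < 1$ from \eqref{eq:IKS:Delta} to guarantee that the orbit $x_j = \Delta_\eps^j(\eps)$ stays in the angle (each application raises $-\ln x_j$ by a factor $\to 1/\Lambda > 1$, so $x_j$ decreases at least geometrically and never drops below $\eps$ until the very first step), I get that $\Delta_\eps^n(\eps)$ and $\Delta_0^n(\eps)$ differ by a quantity that, measured in the $\xi$-chart, is $o(1)$ as $n \to \infty$ uniformly in the relevant range of $\eps$. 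This reduces \eqref{eq:in-gen} to the approximate equation $\Delta_0^n(\eps) = P(0) + o(1)$.

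Next I would apply $\xi$ to both sides. On the left, $\xi(\Delta_0^n(\eps)) = \xi(\eps) + n\ln\Lambda$ exactly, and $\xi(\eps) = \ln(-\ln\eps) + O(1/(-\ln\eps)) = \ln(-\ln\eps) + o(1)$ by the second defining property of $\xi$ in \autoref{lem:gen:rect-chart}. On the right, since $P$ is smooth and $P(0)$ lies in the basin of the origin for $\Delta_0^{-1}$, the point $P(\eps)$ is close to $P(0)$, and $\xi$ — being continuous on $(0,\delta)$ and defined on a neighborhood of $P(0)$ after finitely many iterates of $\Delta_0^{-1}$ (which shifts $\xi$ by $-\ln\Lambda$, an additive constant absorbed into the constant) — gives $\xi(P(\eps)) = \xi(P(0)) + o(1)$. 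Combining, $\ln(-\ln\eps) + n\ln\Lambda = \xi(P(0)) + o(1)$, which rearranges to \eqref{eq:gen:en-estim} with $\Lambda$ here playing the role of $\Lambda^{-1}$ in the statement of the main lemmas — i.e.\ $\ln(-\ln\eps_n) = -n\ln\Lambda + \xi(P(0)) + o(1)$ after translating notation via \autoref{tab:notation}.

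For existence and uniqueness of $\eps_n$, I would argue by monotonicity: the left-hand side $\eps \mapsto \Delta_\eps^n(\eps)$ is, for fixed large $n$, strictly increasing in $\eps$ on the relevant interval — $\Delta_\eps(x)$ is increasing in $x$ by \eqref{eq:IKS:Dx-Delta} and in $\eps$ by \eqref{eq:IKS:Deps-Delta}, so the $n$-fold composite evaluated at the diagonal point $\eps$ inherits strict monotonicity — while the right-hand side $P(\eps)$ varies slowly; as $\eps$ ranges over a fundamental-domain-sized interval the left side sweeps past $P(0)$ exactly once because one application of $\Delta_0$ multiplies $-\ln x$ by roughly $1/\Lambda$, so $\Delta_0^n(\eps)$ covers a full $\xi$-period's worth of values. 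An intermediate-value argument then pins down $\eps_n$, and monotonicity gives uniqueness. The main obstacle I anticipate is the uniformity of the error control in the first paragraph: one must check that the accumulated discrepancy between $\Delta_\eps^j$ and $\Delta_0^j$, when pushed through the highly non-uniform chart $\xi$ (which compresses distances enormously near $0$), really is $o(1)$ and not merely $O(1)$ — this requires that $\eps$ itself tends to $0$ as $n \to \infty$ (true, since $\eps_n$ is forced small by the equation) and a careful telescoping estimate showing the per-step error $O(\eps)$ in the $x$-chart contributes $O(\eps \cdot D\xi)$ that sums to something vanishing, using that $\xi$ is essentially $\ln(-\ln\cdot)$ so $D\xi(x) \asymp 1/(x\ln x)$ and the orbit points $x_j$ grow the factor $-\ln x_j$ geometrically.
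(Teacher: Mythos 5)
Your high-level route is the same as the paper's: replace \(\Delta _{\eps }\) by \(\Delta _{0}\) along the orbit, reduce \eqref{eq:in-gen} to \(\Delta _{0}^{n}(\eps _{n})=P(0)+o(1)\), and then read this off in the rectifying chart via \(\xi \circ \Delta _{0}=\xi +\ln \Lambda \) and \(\xi =\ln(-\ln x)+o(1)\) (existence and uniqueness of \(\eps _{n}\) is in any case quoted from~\cite{IKS-th1}, so your monotonicity sketch is not where the weight lies). The problem is exactly the step you flag as your ``main obstacle'': the accumulated-error estimate \emph{is} the content of the lemma, and your mechanism for it does not work as stated. You propose to measure the per-step error \(O(\eps )\) in the \(\xi \)-chart via \(D\xi (x)\asymp 1/(x\ln x)\), but \autoref{lem:gen:rect-chart} produces \(\xi \) only as a \emph{continuous} function with \(\xi (x)=\ln(-\ln x)+O\left(\frac{1}{-\ln x}\right)\); no derivative or Lipschitz bound is available, and if you fall back on the asymptotic formula, the \(O\left(\frac{1}{-\ln x}\right)\) corrections are not small at the far end of the orbit, where \(\Delta _{\eps }^{j}(\eps )\) is of order \(P(0)\), so a per-step \(\xi \)-bookkeeping would require a two-regime argument you do not supply. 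Moreover your picture of the orbit is reversed: since \(\Delta _{\eps }(x)=x^{\Lambda }e^{O(1)}\) with \(\Lambda <1\), each application multiplies \(-\ln x_{j}\) by roughly \(\Lambda <1\), so \(x_{j}=\Delta _{\eps }^{j}(\eps )\) \emph{increases} from \(\eps \) toward \(P(\eps )\) (which is why it trivially stays in the angle \(x\geq \eps \), and why \(n=O(\ln(-\ln \eps ))\)); your claim that \(-\ln x_{j}\) grows by the factor \(1/\Lambda \) and \(x_{j}\) decreases is wrong, and the geometric-growth heuristic on which you base the summability of the per-step contributions inherits this confusion.

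What is actually needed — and what the paper proves in \autoref{lem:est-iter} and \autoref{lem:est-iter:dist} — is the \(x\)-chart bound \(0<\Delta _{\eps }^{n}(x)-\Delta _{0}^{n}(x)<\eps x^{-\Lambda '}\) for some \(\Lambda '\in (\Lambda ,1)\). The danger is that the per-step error is amplified by \(D_{x}\) of the remaining iterates: by the chain rule, \(D_{\eps }\Delta _{\eps }^{n}(x)\) is a sum of \(n\) terms, each bounded using \eqref{eq:est-Delta:Dx} and \eqref{eq:est-Delta:Deps} by \(2C^{n}\delta c^{-1}x^{-\Lambda }\), and the factor \(nC^{n}\) is then absorbed because the number of admissible iterates satisfies \(n=O(\ln(-\ln x))\), which grows slower than any negative power of \(x\). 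Taking \(x=\eps =\eps _{n}\) gives \(\Delta _{0}^{n}(\eps _{n})=P(0)+O\left(\eps _{n}^{1-\Lambda '}\right)=P(0)+o(1)\), and only at this final point, which lies in a compact part of the basin where continuity of \(\xi \) suffices, does one pass to the rectifying chart. Your proposal identifies the correct reduction and endgame, but leaves precisely this quantitative propagation estimate unproved, so as written it has a genuine gap.
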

The fact that~\eqref{eq:in-gen} has a unique root was proved in~\cite{IKS-th1}, together with a weaker estimate on the root, so we shall only prove the stronger estimate~\eqref{eq:gen:en-estim}.
In \autoref{sec:iter-poinc-map}, we shall prove some estimates on~\(\Delta _{\eps }^{n}(x)\) and its derivatives, then use them in~\autoref{sec:dist-unpert-poinc} to prove that
\begin{equation}
    \label{eq:en-0-approx}
    \Delta _{0}^{n}(\eps _{n})\approx P(0).
\end{equation}
Finally, in \autoref{sec:rectifying-chart} we shall prove \autoref{lem:gen:rect-chart}, then in~\autoref{sec:sparkl-saddle-conn-est} use it and~\eqref{eq:en-0-approx} to prove~\eqref{eq:gen:en-estim}.
\subsection{Iterates of the Poincaré map}%
\label{sec:iter-poinc-map}
Let \(\Delta _{\eps }\) be a family of functions satisfying conclusions of \autoref{lem:IKS}.
We start with some estimates on~\(\Delta _{\eps }^{n}(x)\) and its derivatives.
First, let us fix numbers~\(\delta >0\), \(0<c<1\), \(C>1\) such that for \(0\leq \eps \leq x<\delta \), \(x>0\), we have
\begin{subequations}%
    \label{eq:est-Delta}
    \begin{gather}
      \label{eq:est-Delta:map}
      cx^{\Lambda }<\Delta _{\eps }(x)<Cx^{\Lambda };                                \\
      \label{eq:est-Delta:Dx}
      c\frac{\Delta _{\eps }(x)}{x}<D_{x}\Delta _{\eps }(x)<C\frac{\Delta _{\eps }(x)}{x}; \\
      \label{eq:est-Delta:Deps}
      \frac{1}{2}<D_{\eps }\Delta _{\eps }(x)<2.
    \end{gather}
\end{subequations}

\begin{lemma}%
    \label{lem:est-iter}
    In the settings introduced above, given a number~\(\Lambda '\in (\Lambda , 1)\), there exists \(\delta '>0\) such that the following holds.
    Consider \(0\leq \eps \leq x<\delta \), \(x>0\), and a~natural~\(n\) such that all the iterates~\(\Delta _{\eps }^{k}(x)\), \(k=0,\dots ,n\), are less than~\(\delta \).
    Then
    \begin{subequations}
        \begin{gather}
            \label{eq:est-iter:map}
            c^{\frac{1}{1-\Lambda }}x^{\Lambda ^{n}}<\Delta _{\eps }^{n}(x)<C^{\frac{1}{1-\Lambda }}x^{\Lambda ^{n}};\\
            \label{eq:est-iter:Dx}
            c^{n}\frac{\Delta _{\eps }^{n}(x)}{x}<D_{x}\Delta _{\eps }^{n}(x)<C^{n}\frac{\Delta _{\eps }^{n}(x)}{x}.\\
            \intertext{If additionally \(x<\delta '\), then}
            \label{eq:est-iter:Deps}
            0<D_{\eps }\Delta _{\eps }^{n}(x)<x^{-\Lambda '}.
        \end{gather}
    \end{subequations}
\end{lemma}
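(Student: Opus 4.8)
The plan is to run every triple \((\eps ,x,n)\) allowed by the hypothesis along the single orbit \(y_k\coloneqq \Delta _\eps ^k(x)\), \(k=0,\dots ,n\), apply the one-step estimates \eqref{eq:est-Delta} at each \(y_k\), and then either iterate an inequality or telescope a product. First I would shrink \(\delta \) once more so that, in addition to \eqref{eq:est-Delta}, \(\Delta _\eps (x)>x\) holds for all \(0<\eps \le x<\delta \); this is free, since \eqref{eq:est-Delta:map} gives \(\Delta _\eps (x)>cx^{\Lambda }\) and \(\Lambda <1\). After this reduction the orbit is nondecreasing, so \(\eps \le x=y_0\le y_1\le \dots \le y_n<\delta \), and hence all three estimates of \eqref{eq:est-Delta} are legitimately available at every \(y_k\) with \(k<n\). (We may assume \(n\ge 1\).)

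I would get \eqref{eq:est-iter:map} by induction on \(n\): the inductive step applies \eqref{eq:est-Delta:map} at \(y_{n-1}\) and uses the algebraic identities \(c\cdot \bigl(c^{1/(1-\Lambda )}\bigr)^{\Lambda }=c^{1/(1-\Lambda )}\) and \(C\cdot \bigl(C^{1/(1-\Lambda )}\bigr)^{\Lambda }=C^{1/(1-\Lambda )}\) --- which is exactly where the exponent \(1/(1-\Lambda )\) comes from --- together with monotonicity of \(t\mapsto t^{\Lambda }\) to keep the inequalities pointing the right way. Estimate \eqref{eq:est-iter:Dx} is then immediate from the chain rule \(D_x\Delta _\eps ^n(x)=\prod _{k=0}^{n-1}(D_x\Delta _\eps )(y_k)\): bound each factor by \eqref{eq:est-Delta:Dx} as \(c\,\tfrac{y_{k+1}}{y_k}<(D_x\Delta _\eps )(y_k)<C\,\tfrac{y_{k+1}}{y_k}\) and multiply; the product \(\prod _{k=0}^{n-1}\tfrac{y_{k+1}}{y_k}\) telescopes to \(\tfrac{y_n}{y_0}=\tfrac{\Delta _\eps ^n(x)}{x}\).

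The real content is \eqref{eq:est-iter:Deps}. Differentiating \(\Delta _\eps ^n(x)=\Delta _\eps \bigl(\Delta _\eps ^{n-1}(x)\bigr)\) in \(\eps \) yields the linear recursion
\[
  D_\eps \Delta _\eps ^n(x)=(D_\eps \Delta _\eps )(y_{n-1})+(D_x\Delta _\eps )(y_{n-1})\cdot D_\eps \Delta _\eps ^{n-1}(x),\qquad D_\eps \Delta _\eps ^0(x)=0,
\]
whose solution is \(D_\eps \Delta _\eps ^n(x)=\sum _{k=0}^{n-1}(D_\eps \Delta _\eps )(y_k)\cdot \prod _{l=k+1}^{n-1}(D_x\Delta _\eps )(y_l)\). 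All summands are positive by \eqref{eq:est-Delta:Deps} and \eqref{eq:est-Delta:Dx}, and the \(k=n-1\) summand (empty inner product \(=1\)) already exceeds \(\tfrac 12\), so \(D_\eps \Delta _\eps ^n(x)>0\). For the upper bound I would use \((D_\eps \Delta _\eps )(y_k)<2\) from \eqref{eq:est-Delta:Deps} and identify the inner product with \(D_x\Delta _\eps ^{n-1-k}(y_{k+1})\), the derivative of the \((n-1-k)\)-th iterate at \(y_{k+1}\) (note \(\Delta _\eps ^{n-1-k}(y_{k+1})=y_n\)); then \eqref{eq:est-iter:Dx} with base point \(y_{k+1}\) and \eqref{eq:est-iter:map} give
\[
  \prod _{l=k+1}^{n-1}(D_x\Delta _\eps )(y_l)<C^{n-1-k}\,\frac{y_n}{y_{k+1}}<C^{n-1-k}\,(C/c)^{1/(1-\Lambda )}\,x^{\Lambda ^n-\Lambda ^{k+1}}\le C^{n-1-k}\,(C/c)^{1/(1-\Lambda )}\,x^{-\Lambda },
\]
the last step because \(\Lambda ^n\ge 0\), \(\Lambda ^{k+1}\le \Lambda \) and \(0<x<1\). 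Summing the geometric series in \(C\) leaves a bound of the form \(0<D_\eps \Delta _\eps ^n(x)<\const \cdot C^{n}\,x^{-\Lambda }\) with \(\const =\tfrac{2(C/c)^{1/(1-\Lambda )}}{C-1}\).

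The hard part --- and the reason the statement promises only \(\Lambda '>\Lambda \), and only for \(x<\delta '\) --- is that \(C^{n}\) is unbounded in \(n\), so the last bound is empty without a cap on \(n\). That cap is hidden in the hypothesis: from \(y_n=\Delta _\eps ^n(x)<\delta \) and \eqref{eq:est-iter:map} we get \(x^{\Lambda ^n}<\delta \,c^{-1/(1-\Lambda )}\), and, after shrinking \(\delta \) so that this right-hand side is below \(1/e\), taking the logarithm twice gives \(n\le A\ln \ln (1/x)+B\) for all sufficiently small \(x\), with \(A=(-\ln \Lambda )^{-1}>0\) and \(B\) depending only on \(\delta ,c,\Lambda \). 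Hence \(C^{n}\le C^{B}\bigl(\ln (1/x)\bigr)^{A\ln C}\) grows only like a fixed power of \(\ln (1/x)\), whereas the slack \(x^{-(\Lambda '-\Lambda )}\) we are allowed to spend grows like a genuine positive power of \(1/x\). Therefore there is \(\delta '>0\) such that \(\const \cdot C^{n}\,x^{-\Lambda }<x^{-\Lambda '}\) for all \(0<x<\delta '\), which is \eqref{eq:est-iter:Deps}. I expect this final comparison --- extracting the double-logarithmic bound on \(n\) from the hypothesis that all iterates stay below \(\delta \), and pitting it against the exponent one can afford to lose --- to be the only genuinely delicate point; everything before it is routine iteration and telescoping.
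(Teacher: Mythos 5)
Your proof is correct and takes essentially the same route as the paper: induction for \eqref{eq:est-iter:map}, chain-rule telescoping for \eqref{eq:est-iter:Dx}, and for \eqref{eq:est-iter:Deps} the same chain-rule sum, positivity for the lower bound, an upper bound of the form \(\const\cdot C^{n}x^{-\Lambda }\), and then the double-logarithmic cap on \(n\) extracted from the hypothesis that the iterates stay below \(\delta \), which absorbs \(C^{n}\) into \(x^{-(\Lambda '-\Lambda )}\). The only differences are cosmetic: you sum a geometric series where the paper uses the cruder bound \(2nC^{n}\delta c^{-1}x^{-\Lambda }\), and your ``shrink \(\delta \)'' step (ensuring \(\delta <c^{1/(1-\Lambda )}\), hence monotonicity of the orbit and a nontrivial bound on \(n\)) is the same implicit smallness requirement on the initially fixed \(\delta ,c,C\) that the paper's own estimate on \(n\) relies on.
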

\begin{proof}
    The estimates~\eqref{eq:est-iter:map} on~\(\Delta _{\eps }(x)\) immediately follow from~\eqref{eq:est-Delta:map} and~\(\Lambda <1\) by induction.
    Similarly, the estimates~\eqref{eq:est-iter:Dx} on~\(D_{x}\Delta _{\eps }^{n}(x)\) follow from the chain rule and~\eqref{eq:est-Delta:Dx}.

    Let us prove~\eqref{eq:est-iter:Deps}.
    By the chain rule, we get
    \[
        D_{\eps }\left(\Delta _{\eps }^{n}(x)\right)=\sum _{j=0}^{n-1} \left[D_{y}\Delta _{\eps }^{j}(y)\right]_{y=\Delta _{\eps }^{n-j}(x)}\left[D_{\eps }\Delta _{\eps }(y)\right]_{y=\Delta _{\eps }^{n-j-1}(x)}.
    \]
    All terms of the sum are positive, hence the derivative is positive as well, so we have the lower estimate.

    Now substitute~\eqref{eq:est-iter:Dx} and~\eqref{eq:est-Delta:Deps},
    \begin{align}
      \notag
      D_{\eps }\left(\Delta _{\eps }^{n}(x)\right) & < \sum _{j=0}^{n-1}2C^{j}\frac{\Delta _{\eps }^{n}(x)}{\Delta _{\eps }^{n-j}(x)}. \\
      \notag
      \intertext{Note that \(C^{j}<C^{n}\), and \(\Delta _{\eps }^{n-j}(x)\geq \Delta _{\eps }(x)\), hence}
      D_{\eps }\left(\Delta _{\eps }^{n}(x)\right) & < 2nC^{n}\frac{\Delta _{\eps }^{n}(x)}{\Delta _{\eps }(x)}.                   \\
      \intertext{Due to~\eqref{eq:est-Delta:map} and~\(\Delta _{\eps }^{n}(x)<\delta \), this implies}
      \label{eq:iter-est:Deps:with-n}
      D_{\eps }\left(\Delta _{\eps }^{n}(x)\right) & < 2nC^{n}\delta c^{-1}x^{-\Lambda }.
    \end{align}
    Now we need to estimate~\(nC^{n}\).

    Note that in the chart \(\xi =\ln(-\ln x)\), we start at~\(\ln(-\ln x)\), then make \(n\) jumps of size close~to~\(\ln \Lambda \), and arrive to a number larger than~\(\ln(-\ln \delta )\).
    Therefore, the number of jumps cannot be greater than~\(O(\ln(-\ln x))\).
    Formally, \(\Delta _{\eps }(x)<\delta \) together with the lower estimate in~\eqref{eq:est-iter:map} imply
    \begin{gather*}
        n<\frac{\ln(-\ln x)}{-\ln \Lambda }+a,\\
        \shortintertext{where}
        a = \frac{1}{-\ln \Lambda }\ln\left(\frac{1}{1-\Lambda }\ln c+\ln \delta \right).
    \end{gather*}
    Thus
    \begin{gather*}
        nC^{n}<C^{a}(-\ln x)^{-\frac{\ln C}{\ln \Lambda }}\left(\frac{\ln(-\ln x)}{-\ln \Lambda }+a\right).
    \end{gather*}
    The right hand side is asymptotically smaller than any negative power of~\(x\) as~\(x\to 0\), hence for~\(x\) small enough we have
    \[
        nC^{n}<\frac{c}{2\delta }x^{\Lambda -\Lambda '}.
    \]
    This inequality together with \eqref{eq:iter-est:Deps:with-n} implies the upper estimate in~\eqref{eq:est-iter:Deps}.
\end{proof}
\subsection{Distance to the unperturbed Poincaré map}%
\label{sec:dist-unpert-poinc}
Let us prove that \(\Delta _{\eps }^{n}(x)\) is close to~\(\Delta _{0}^{n}(x)\).
Let \(\delta , c, C\) be as in \autoref{sec:iter-poinc-map},~\eqref{eq:est-Delta}.
Let \(\Lambda '\in (\Lambda , 1)\) and~\(\delta '>0\) be as in \autoref{lem:est-iter}.

The following lemma immediately follows from \eqref{eq:est-iter:Deps}, the assumption~\(\Delta _{\eps '}^{n}(x)<\delta \), and the Mean Value Theorem.
\begin{lemma}%
    \label{lem:est-iter:dist}
    In the settings inroduced above, consider \(0\leq \eps \leq x<\delta '\), \(x>0\), and a~natural~\(n\) such that all the iterates~\(\Delta _{\eps '}^{k}(x)\), \(k=0,\dots ,n\), \(0<\eps '<\eps \), are less than~\(\delta \).
    Then
    \begin{equation}
        \label{eq:est-iter:dist}
        0<\Delta _{\eps }^{n}(x)-\Delta _{0}^{n}(x)<\eps x^{-\Lambda '}.
    \end{equation}
\end{lemma}
Let us use this lemma to~prove~\eqref{eq:en-0-approx}.
\begin{lemma}
    Let~\(\eps _{n}\) be the root of~\eqref{eq:in-gen}.
    Then
    \begin{equation}
        \label{eq:en-vs-en0}
        \Delta _{0}^{n}(\eps _{n})=P(0)+O\left(\eps _{n}^{1-\Lambda '}\right)
    \end{equation}
    as \(n\to \infty \).
\end{lemma}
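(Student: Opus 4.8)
The plan is to obtain the estimate by reading it off from~\autoref{lem:est-iter:dist}. By the very definition of $\eps_n$ as the root of~\eqref{eq:in-gen} we have $\Delta_{\eps_n}^{n}(\eps_n)=P(\eps_n)$, so $\Delta_0^{n}(\eps_n)$ differs from $P(\eps_n)$ by precisely the perturbation error $\Delta_{\eps_n}^{n}(\eps_n)-\Delta_0^{n}(\eps_n)$, which \autoref{lem:est-iter:dist} controls. Concretely, I would apply that lemma with $x=\eps=\eps_n$. This is legitimate for $n$ large: first, $\eps_n\to0$ by the weaker estimate on the root already available from~\cite{IKS-th1}, so $\eps_n<\delta'$; second, the boundedness hypothesis holds because the length-$n$ orbit $\eps_n,\Delta_{\eps_n}(\eps_n),\dots,\Delta_{\eps_n}^{n}(\eps_n)=P(\eps_n)$ shadows the backward $\Delta_0$-orbit of $P(0)$, which converges to~$0$ and hence stays in a compact subset of $(0,\delta)$ since $P(0)$ lies in the attraction basin of the origin for $\Delta_0^{-1}$; and by monotonicity of $\eps'\mapsto\Delta_{\eps'}$ and of $\Delta_{\eps'}$ itself (see~\eqref{eq:est-Delta:Deps}, \eqref{eq:est-Delta:Dx}) the same bound then holds for every $\eps'\in(0,\eps_n)$.

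With this in hand, \autoref{lem:est-iter:dist} gives
\[
    0<\Delta_{\eps_n}^{n}(\eps_n)-\Delta_0^{n}(\eps_n)<\eps_n\cdot\eps_n^{-\Lambda'}=\eps_n^{1-\Lambda'},
\]
so, replacing the first term by $P(\eps_n)$, we get $\Delta_0^{n}(\eps_n)=P(\eps_n)+O(\eps_n^{1-\Lambda'})$. Since $P$ is smooth, hence Lipschitz near~$0$, we have $P(\eps_n)=P(0)+O(\eps_n)$; and because $0<1-\Lambda'<1$ we have $\eps_n=o(\eps_n^{1-\Lambda'})$ as $\eps_n\to0$, so the term $O(\eps_n)$ is absorbed into $O(\eps_n^{1-\Lambda'})$. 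Combining these, we arrive at~\eqref{eq:en-vs-en0}, which is the rigorous form of the heuristic~\eqref{eq:en-0-approx}.

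The one step that takes real care is the verification that the whole length-$n$ $\Delta_{\eps_n}$-orbit of $\eps_n$ (and, uniformly, of the nearby $\eps'$) stays below~$\delta$, which is exactly what makes~\autoref{lem:est-iter:dist} applicable; but this is not a genuinely new obstacle — it is the same estimate that underlies the existence and uniqueness of the root of~\eqref{eq:in-gen} established in~\cite{IKS-th1}, so I would invoke it rather than reprove it. Everything else is a one-line substitution together with the trivial comparison $O(\eps_n)\subset O(\eps_n^{1-\Lambda'})$.
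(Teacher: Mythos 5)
Your proposal is correct and follows essentially the same route as the paper: apply \autoref{lem:est-iter:dist} with \(x=\eps=\eps_{n}\), use the root equation \(\Delta_{\eps_{n}}^{n}(\eps_{n})=P(\eps_{n})\), expand \(P(\eps_{n})=P(0)+O(\eps_{n})\), and absorb the \(O(\eps_{n})\) term since \(1-\Lambda'<1\). Your additional verification of the applicability hypotheses (via the weaker root estimate from~\cite{IKS-th1} and monotonicity in \(\eps\)) is detail the paper leaves implicit, not a departure in method.
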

\begin{proof}
    Applying \autoref{lem:est-iter:dist} to \(\eps =x=\eps _{n}\), we get
    \[
        0<\Delta _{\eps _{n}}^{n}(\eps _{n})-\Delta _{0}^{n}(\eps _{n})<\eps _{n}^{1-\Lambda '}.
    \]
    Since \(\eps _{n}\) is the root of~\eqref{eq:in-gen}, this implies
    \[
        P(\eps _{n})-\eps _{n}^{1-\Lambda '}<\Delta _{0}^{n}(\eps _{n})<P(\eps _{n}).
    \]
    Finally, substituting \(P(\eps _{n})=P(0)+O(\eps _{n})\) and taking into account \(1-\Lambda '<1\), we get~\eqref{eq:en-vs-en0}.
\end{proof}
Now we have an estimate on~\(\eps _{n}\) that involves only~\(\Delta _{0}\) but not~\(\Delta _{\eps _{n}}\).
The rest of the proof will be done in the rectifying chart for~\(\Delta _{0}\).

\subsection{Rectifying chart: proof of \autoref{lem:gen:rect-chart}}%
\label{sec:rectifying-chart}
\subsubsection*{Definition and uniqueness}
First, assume that we already have~\(\xi \), and try to find a~formula for this map.
Since~\(\xi (\Delta (x))=\xi (x)+\ln \Lambda \), we have
\[
    \xi (x)=\xi (\Delta ^{n}(x))-n\ln \Lambda .
\]
Recall that \(\Delta ^{n}(x)\to 0\) as \(n\to \infty \), hence~\(\xi (\Delta ^{n}(x))=\ln(-\ln \Delta ^{n}(x))+o(1)\).
So, if a rectifying chart exists, then it is given by the formula
\begin{align}
  \label{eq:rect-chart:def}
  \xi (x) & =\lim_{n\to \infty }\xi _{n}(x), & \xi _{n}(x) & =\ln(-\ln \Delta ^{n}(x))-n\ln \Lambda .
\end{align}
This proves uniqueness of the rectifying chart.

On the other hand,
\begin{equation}
    \label{eq:xi-n-conj}
    \xi _{n}(\Delta (x))=\xi _{n+1}(x)+\ln \Lambda ,
\end{equation}
so if~\eqref{eq:rect-chart:def} converges, then it defines a~rectifying chart.
Now, let us prove that \((\xi _{n}(x))\) actually converges, and the limit is close to~\(\ln(-\ln x)\).

\subsubsection*{Convergence and continuity}
Note that
\begin{align}
  \notag
  \xi _{n+1}(x)-\xi _{n}(x) & = \ln(-\ln \Delta ^{n+1}(x))-\ln(-\ln \Delta ^{n}(x))-\ln \Lambda  \\
  \label{eq:xi-diff:eq}
                     & = \left[\ln(-\ln \Delta (y))-\ln(-\ln y)-\ln \Lambda \right]_{y=\Delta ^{n}(x)}.
\end{align}
In order to estimate the latter expression, let us rewrite~\eqref{eq:IKS:Delta} in the chart \(\ln(-\ln x)\).

Namely, applying \(\ln\) to~\eqref{eq:IKS:Delta}, we get
\begin{align*}
  \ln\left(\Delta (x)\right)        & = \Lambda \ln x + O(1),                                                              \\
  \shortintertext{hence}
  -\ln\left(\Delta (x)\right)       & = \Lambda (-\ln x)\left(1+O\left(\frac{1}{-\ln x}\right)\right),                    \\
  \shortintertext{thus}
  \ln(-\ln\left(\Delta (x)\right)) & = \ln \Lambda  + \ln(-\ln x) + \ln\left(1+O\left(\frac{1}{-\ln x}\right)\right), \\
                               & = \ln(-\ln x)+\ln \Lambda +O\left(\frac{1}{-\ln x}\right)
\end{align*}
as~\(x\to 0+\).

Similarly to~\autoref{sec:iter-poinc-map}, introduce \(\delta '\in (0, \delta )\) and \(c, C>0\) such that for~\(x\in (0, \delta ')\) we have~\eqref{eq:est-Delta:map} and
\begin{equation}
    \label{eq:Deltat-ineq}
    \left|\ln(-\ln(\Delta (x)))-\ln(-\ln x)-\ln \Lambda \right|\leq \frac{C}{-\ln x}.
\end{equation}
We also require \(-\ln \delta '>\frac{2C}{\ln \Lambda }\), so~\eqref{eq:Deltat-ineq} implies
\[
    \ln(-\ln(\Delta (x)))\geq \ln(-\ln x)+\frac{\ln \Lambda }{2},
\]
hence
\begin{equation}
    \label{eq:Deltat-iter-geom}
    -\ln(\Delta ^{n}(x))\geq -\sqrt{\Lambda }^{n}\ln x.
\end{equation}

Let us prove that \((\xi _{n})\) converges to a continuous function on~\((0, \delta ')\).
Due~to~\eqref{eq:xi-n-conj}, this will imply convergence to a continuous function on the attraction basin of~\(0\) which includes~\((0, \delta )\).

Due to~\eqref{eq:xi-diff:eq} and~\eqref{eq:Deltat-ineq}, we have
\[
    \left|\xi _{n+1}(x)-\xi _{n}(x)\right|\leq \frac{C}{-\ln \left(\Delta ^{n}(x)\right) }.
\]
Applying~\eqref{eq:Deltat-iter-geom}, we get
\begin{equation}%
    \label{eq:Abel-majorate}
    \left|\xi _{n+1}(x)-\xi _{n}(x)\right|\leq \frac{C}{-\sqrt{\Lambda }^{n}\ln x}.
\end{equation}

Finally, the series
\[
    \xi (x)=\xi _{0}(x)+\sum _{n=0}^{\infty }(\xi _{n+1}(x)-\xi _{n}(x))
\]
is majorated by an infinite geometric series with common ratio~\(\Lambda ^{-0.5}<1\), hence it converges uniformly to a continuous function.

\subsubsection*{Estimate}
Due~to~\eqref{eq:Abel-majorate}, we have
\[
    |\xi (x)-\xi _{0}(x)|\leq \sum _{n=0}^{\infty }\frac{C}{-\sqrt{\Lambda }^{n}\ln x}=\frac{C\sqrt{\Lambda }}{-(\sqrt{\Lambda }-1)\ln x}=O\left(\frac{1}{-\ln x}\right).
\]
Note that~\(\xi _{0}(x)=\ln(-\ln x)\), thus we proved the required estimate on~\(\xi (x)\).

\subsection{Sparkling saddle connections: estimate}%
\label{sec:sparkl-saddle-conn-est}
Let us complete the proof of~\autoref{lem:gen:estim}.
Due to \autoref{lem:est-iter:dist},
\begin{equation}
    \label{eq:en-vs-en0-o1}
    \Delta _{0}^{n}(\eps _{n})=P(0)+O\left(\eps _{n}^{1-\Lambda '}\right)=P(0)+o(1).
\end{equation}
Let \(\xi \) be the rectifying chart for~\(\Delta _{0}\) provided by~\autoref{lem:gen:rect-chart}.
Then
\begin{gather*}
    \xi \left(\Delta _{0}^{n}(\eps _{n})\right)=\xi \left(P(0)+o(1)\right),\\
    \shortintertext{hence}
    \xi (\eps _{n})+n\ln \Lambda =\xi (P(0))+o(1),\\
    \shortintertext{thus}
    \ln(-\ln \eps _{n})+o(1)+n\ln \Lambda =\xi (P(0))+o(1).
\end{gather*}
This completes the proof of~\autoref{lem:gen:estim}, hence the proof of~\autoref{thm:main}.
\section{Corollaries, ideas and conjectures}%
\label{sec:future}
\subsection{Adding more separatrices $I_j$}%
\label{sec:adding-more-separ}
Let \({\mathbf{TH}}_{N,K}\) be the class of vector fields similar to~\({\mathbf{TH}}_{N}\), but with~\(K\) separatrices~\(i_{j}\) winding onto~\(l\) from interior.
Similarly to~\eqref{eq:Phi-k}, for~\(v\in {\mathbf{TH}}_{N,K}\) put~\(\Phi _{k}(v)=\xi _e(E_{k+1})-\xi _{e}(E_{k})\), this time without rescaling, and~\(\Psi _{j}(v)=\xi _{i}(I_{j+1})-\xi _{i}(I_{j})\), where \(I_{j}\) are defined in the same way as~\(E_{k}\), but for the separatrices~\(i_{j}\) instead of~\(e_{k}\).
Let \([\Phi :\Psi ](v)\) be the vector~\((\Phi _{1}(v), \dots , \Phi _{N}(v), \Psi _{1}(v), \dots , \Psi _{K}(v))\) as an element of~\({\mathbb R}{\mathbb P}^{N+K-1}\).

The proof of \autoref{thm:main} can be easily adjusted to prove the following theorem.
\begin{theorem}
    Given two moderately topologically equivalent local families of vector fields~\(V, {\tilde V}\in {\mathbf{TH}}_{N,K}^{\pitchfork }\) with irrational~\(\frac{\ln \Lambda _{e}}{-\ln \Lambda _{i}}\), we have \([\Phi :\Psi ](v_{0})=[\Phi :\Psi ]({\tilde v}_{0})\).
\end{theorem}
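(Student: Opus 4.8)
The plan is to run the proof of \autoref{thm:main} twice --- once matching interior sparkling connections against exterior ones, once the other way around --- to extract the projective data \([\Phi_1:\dots:\Phi_N]\) and \([\Psi_1:\dots:\Psi_K]\), and then to glue them using the invariance of \(\varphi(v)=\ln\Lambda_e(v)/(-\ln\Lambda_i(v))\). By telescoping one has \(\sum_k\Phi_k=\ln\Lambda_e\) and \(\sum_j\Psi_j=\ln\Lambda_i\) (cf.\ \eqref{eq:Phi-k} and the convention \(I_{K+1}=\Delta_i(I_1)\)), so the two sums have ratio \(-\varphi\); hence the triple \(\bigl([\Phi_1:\dots:\Phi_N],\,[\Psi_1:\dots:\Psi_K],\,\varphi\bigr)\) determines the point \([\Phi_1:\dots:\Phi_N:\Psi_1:\dots:\Psi_K]\in{\mathbb R}{\mathbb P}^{N+K-1}\). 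So it is enough to show that each of these three pieces of data is a topological invariant.

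First I would fix notation. Restricting to the one-parameter subfamily \(\{v_\eps\mid\eps\in{\mathcal E}\}\) as in \autoref{sub:unfolding}, for \(j=1,\dots,K\) let \(\iota_{j,n}\) be the value of \(\eps\) for which \(u(\eps)\) winds \(n\) times around \(l\) and then hits \(I_j(\eps)\), and let \(\eps_{k,m}\) be as in \autoref{sec:sparkl-saddle-conn}. Applying \autoref{lem:gen:estim} with \(P(0)=I_j\) and with \(P(0)=E_k\) yields exactly the estimates of \autoref{lem:estim-ekm-in}, with the single point \(I\) replaced by any of the \(I_j\):
\[
  \ln(-\ln\iota_{j,n})=-n\ln\Lambda_i+\xi_i(I_j)+o(1),\qquad
  \ln(-\ln\eps_{k,m})=m\ln\Lambda_e+\xi_e(E_k)+o(1).
\]
After re-enumerating the \(e_k\) (keeping \(e_1\)) and the \(i_j\) so that both families are ordered as in \eqref{eq:ekm-order}, for \(n\) large the index of the smallest exterior point above \(\iota_{j,n}\) is the cyclic index \(k\) with \(\ln(-\ln\iota_{j,n})\bmod\ln\Lambda_e\in(\xi_e(E_k),\xi_e(E_{k+1})]+o(1)\), exactly as in \autoref{cor:freq}.

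Next comes the frequency argument. Fix \(j\) and rescale the circle \({\mathbb R}/(\ln\Lambda_e){\mathbb Z}\) to \({\mathbb R}/{\mathbb Z}\): the sequence \(n\mapsto\ln(-\ln\iota_{j,n})\) is \(o(1)\)-close to an orbit of the rotation by \(-\ln\Lambda_i\), with rotation number \((-\ln\Lambda_i)/\ln\Lambda_e=1/\varphi(v_0)\), which is irrational by hypothesis. By \autoref{lem:freq}, the frequency among \(n\) with \(\iota_{j,n}>\eps\) of those matched to exterior index \(k\) tends, as \(\eps\to0\), to \((\xi_e(E_{k+1})-\xi_e(E_k))/\ln\Lambda_e=\Phi_k(v_0)/\sum_{k'}\Phi_{k'}(v_0)\), independently of \(j\). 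Exchanging the roles of interior and exterior --- now matching each \(\eps_{k,m}\) to the smallest interior point above it, on the circle \({\mathbb R}/(-\ln\Lambda_i){\mathbb Z}\) with rotation number \(\varphi(v_0)\) --- the same computation produces the normalized arc lengths \(|\Psi_j(v_0)|/\sum_{j'}|\Psi_{j'}(v_0)|\) (independently of \(k\)) as limits of frequencies.

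It remains to check that these limits, and \(\varphi\), are invariant under an equivalence \(H\colon(\alpha,x)\mapsto(h(\alpha),H_\alpha(x))\). As in \autoref{sec:two-equiv-fam} (following \cite[Sec.~2.3.3]{IKS-th1}), \(h\) sends \({\mathcal E}_+\) to \(\tilde{\mathcal E}_+\) and carries the germ of the interior sparkling set \(\{\iota_{j,n}\}\) onto \(\{\tilde\iota_{j,n}\}\) and of the exterior one \(\{\eps_{k,m}\}\) onto \(\{\tilde\eps_{k,m}\}\); since \(e_1\) is topologically distinguished and both families are ordered as in \eqref{eq:ekm-order}, \(h(\eps_{k,m})=\tilde\eps_{k,m+b}\) with no shift of \(k\), while for the non-distinguished \(i_j\) one only gets \(h(\iota_{j,n})=\tilde\iota_{j+c,n+a_j}\) with a fixed cyclic shift \(c\) and \(a_j\in{\mathbb Z}\). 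Thus the matching combinatorics above is preserved up to finitely many terms and the relabelling \(c\), so all the frequency limits agree for \(V\) and \(\tilde V\): this gives \([\Phi_1:\dots:\Phi_N](v_0)=[\Phi_1:\dots:\Phi_N](\tilde v_0)\) and \([\Psi_1:\dots:\Psi_K](v_0)=[\Psi_1:\dots:\Psi_K](\tilde v_0)\), the latter up to the cyclic symmetry inherent in the enumeration of the \(i_j\). Finally, since \(\#\{(j,n):\iota_{j,n}>\eps\}\sim K\ln(-\ln\eps)/(-\ln\Lambda_i)\) and \(\#\{(k,m):\eps_{k,m}>\eps\}\sim N\ln(-\ln\eps)/\ln\Lambda_e\), their ratio tends to \(\tfrac KN\varphi(v_0)\); as \(h\) is monotone near \(0\) and preserves these sets up to \(O(1)\) terms, the ratio, hence \(\varphi\), is preserved (this is the argument of \cite[Corollary~1]{IKS-th1}). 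Assembling the three invariants as in the first paragraph completes the proof. The analytic input is already available --- \autoref{lem:estim-ekm-in}/\autoref{lem:gen:estim} applied to the several points \(I_j\), together with \autoref{lem:freq} --- so the only real work, and the main place to be careful, is the bookkeeping: tracking which index-\(j\) interior point pairs with which index-\(k\) exterior point, verifying that \(h\) respects this pairing up to the shifts \(a_j,b\) and the cyclic relabelling \(c\), and noting that \([\Phi]\), \([\Psi]\) and \(\varphi\) are precisely the data that pin down \([\Phi:\Psi]\).
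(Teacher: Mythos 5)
Your proposal is correct and is essentially the paper's own argument: the paper gives no details beyond saying that the proof of \autoref{thm:main} "can be easily adjusted", and your adjustment --- applying \autoref{lem:gen:estim} to each of the points \(I_j\) and \(E_k\), running the frequency argument of \autoref{lem:freq}/\autoref{cor:freq} in both directions (interior sequence against exterior arcs and vice versa), and then recovering \([\Phi:\Psi]\) from the normalized tuples together with the invariant \(\varphi\) --- is exactly the intended adaptation, with the \(h\)-bookkeeping handled as in \autoref{sec:two-equiv-fam}. Your caveat that \([\Psi]\) is a priori only pinned down up to a cyclic relabelling of the non-distinguished separatrices \(i_j\) is a fair observation about an imprecision in the paper's statement, not a gap in your argument.
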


\subsection{Families with more parameters}%
\label{sec:families-with-more}
The construction of~\({\mathbf{TH}}_{N}\) can easily be adjusted to provide similar result for \(d\)-parameter family, \(d>3\).
Indeed, it is enough to add \(d-3\) semi-stable limit cycles of multiplicity~\(2\) surrounding the whole picture, cf.~\cite[Sec. 3.7]{IKS-th1}.
Then for a generic \(d\)-parameter unfolding of this vector field, the subfamily defined by the condition “all semi-stable limit cycles are unbroken” is a family of class \({\mathbf{TH}}_{N}^{\pitchfork }\), hence we can apply \autoref{thm:main} to this subfamily.
So, we have the following theorem.
\begin{theorem}%
    \label{thm:dim-d}
    For each natural \(N\) and \(d\geq 3\), there exist a Banach submanifold \({\mathbf{TH}}_N^{d}\subset \Vect(S^2)\) of codimension~\(d\) and a smooth surjective function~\(\Phi :{\mathbf{TH}}_N^{d}\to {\mathbb R}_+^N\) such that for two moderately topologically equivalent families \(V, {\tilde V}\in \left({\mathbf{TH}}_N^{d}\right)^\pitchfork \) we have
    \begin{itemize}
      \item \(\varphi (v_0)=\varphi ({\tilde v}_0)\), where \(\varphi (v)=\Phi _1(v)+\cdots +\Phi _N(v)\);
      \item if \(\varphi (v_0)\) is irrational, then \(\Phi (v_0)=\Phi ({\tilde v}_0)\).
      \item if \(\varphi (v_0)\) is a rational number with denominator~\(q\), then for each~\(k=1,\dots ,N\) we have \(\left|\Phi _{k}(v_{0})-\Phi _{k}({\tilde v}_{0})\right|\leq \frac{2}{q}\).
    \end{itemize}
\end{theorem}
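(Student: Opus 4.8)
The plan is to reduce \autoref{thm:dim-d} to \autoref{thm:main} by adjoining $d-3$ extra degeneracies that are ``decoupled'' from the tears-of-the-heart configuration. Fix a vector field $v\in{\mathbf{TH}}_{N}$ and, as in~\cite[Sec. 3.7]{IKS-th1}, insert $d-3$ nested semi-stable (multiplicity-$2$) limit cycles $C_{1},\dots,C_{d-3}$ surrounding the whole interesting part of the phase portrait --- the polycycle $\gamma$, the loop $l$ and the separatrices $i,e_{1},\dots,e_{N}$ --- and lying in the basin of the sink $\infty$; let ${\mathbf{TH}}_{N}^{d}$ be the class of normalized vector fields obtained this way. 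Locally near such a $v$, the class ${\mathbf{TH}}_{N}^{d}$ is cut out by $F=0$, $G_{1}=0,\dots,G_{d-3}=0$, where $F\colon\Vect(S^{2})\to{\mathbb R}^{3}$ is a submersion with $\{F=0\}={\mathbf{TH}}_{N}$ and $G_{j}$ is the usual codimension-one functional detecting the semi-stable cycle $C_{j}$. Because the conditions defining $F$ are supported near $L$, $M$, $\gamma$, $l$ while each $G_{j}$ is supported near the pairwise disjoint cycle $C_{j}$, the map $(F,G_{1},\dots,G_{d-3})$ is a submersion at $v$; hence ${\mathbf{TH}}_{N}^{d}$ is a Banach submanifold of $\Vect(S^{2})$ of codimension $d$, one has ${\mathbf{TH}}_{N}\pitchfork\{G=0\}$ with ${\mathbf{TH}}_{N}\cap\{G=0\}={\mathbf{TH}}_{N}^{d}$, and ${\mathbf{TH}}_{N}^{d}$ is also a codimension-$(d-3)$ submanifold of ${\mathbf{TH}}_{N}$. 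Since the cross-section $\Gamma$ to $l$ and everything built on it in \autoref{sec:coords} and \autoref{sec:the-invariant} is untouched by the $C_{j}$, I would define $\Phi=(\Phi_{1},\dots,\Phi_{N})\colon{\mathbf{TH}}_{N}^{d}\to{\mathbb R}_{+}^{N}$ by the very same formula~\eqref{eq:Phi-k}; it is smooth on this submanifold, and it is surjective because any $v\in{\mathbf{TH}}_{N}$ with a prescribed value of $\Phi$ can be modified away from $\gamma\cup l$ to acquire the cycles $C_{j}$ without changing $\Phi(v)$.

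Next I would extract a $3$-parameter subfamily. Let $V\in({\mathbf{TH}}_{N}^{d})^{\pitchfork}$. Transversality means $(F,G)\circ V\colon({\mathbb R}^{d},0)\to({\mathbb R}^{d},0)$ is a germ of a diffeomorphism, so I can use $(\sigma,\tau)$ with $\sigma=F\circ V=(\eps,\sigma_{2},\sigma_{3})$ and $\tau=G\circ V=(\tau_{1},\dots,\tau_{d-3})$ as coordinates on the base, where $\tau_{j}$ is the splitting parameter of $C_{j}$ and $\{\tau_{j}=0\}$ is the locus where $C_{j}$ stays a multiplicity-$2$ cycle. On $\{\tau=0\}$ the family $V$ takes values in $\{G=0\}$, and $\sigma|_{\{\tau=0\}}$ is a diffeomorphism onto $({\mathbb R}^{3},0)$; hence $V|_{\{\tau=0\}}$ is transverse, inside $\{G=0\}$, to ${\mathbf{TH}}_{N}^{d}={\mathbf{TH}}_{N}\cap\{G=0\}$. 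Combined with ${\mathbf{TH}}_{N}\pitchfork\{G=0\}$, the elementary fact that transversality inside $B$ to $A\cap B$ together with $A\pitchfork B$ implies transversality to $A$ upgrades this to $V|_{\{\tau=0\}}\pitchfork{\mathbf{TH}}_{N}$ in the ambient space. Thus $V|_{\{\tau=0\}}\in{\mathbf{TH}}_{N}^{\pitchfork}$, and $\Phi(v_{0})$ (with $v_{0}=V(0)\in{\mathbf{TH}}_{N}^{d}$) coincides with the invariant that \autoref{thm:main} attaches to the germ $(V|_{\{\tau=0\}},0)$, since both are given by~\eqref{eq:Phi-k} at the same vector field.

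Finally, the equivalence. Let $V,{\tilde V}\in({\mathbf{TH}}_{N}^{d})^{\pitchfork}$ be moderately topologically equivalent via $H\colon(\alpha,x)\mapsto(h(\alpha),H_{\alpha}(x))$. By \autoref{rem:moderate-corr} the continuous families of limit cycles near each $C_{j}$ and near each cycle splitting off $C_{j}$ correspond under $H$, so the topological picture of the $C_{j}$-bifurcation (two hyperbolic cycles / one semi-stable cycle / no cycle) is preserved; exactly as in~\cite[Sec. 2.3.3 and Sec. 3.7]{IKS-th1} this forces $h(\{\tau_{j}=0\})=\{{\tilde\tau}_{j}=0\}$ for every $j$, hence $h(\{\tau=0\})=\{{\tilde\tau}=0\}$. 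Restricting $H$ to these subfamilies gives a moderate topological equivalence between $V|_{\{\tau=0\}}$ and ${\tilde V}|_{\{{\tilde\tau}=0\}}$, both of class ${\mathbf{TH}}_{N}^{\pitchfork}$, and \autoref{thm:main} applied to this pair yields all three conclusions for $v_{0}$ and ${\tilde v}_{0}$ because the two notions of $\Phi$ agree on ${\mathbf{TH}}_{N}^{d}$. The one genuinely delicate step is the last one --- that $h$ must respect the ``all cycles unbroken'' locus $\{\tau=0\}$ --- but the $C_{j}$ play precisely the role of the extra semi-stable cycles in the $d$-parameter construction of~\cite[Sec. 3.7]{IKS-th1}, so the argument there applies, and everything else is bookkeeping on top of \autoref{thm:main}.
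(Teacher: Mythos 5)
Your proposal follows essentially the same route as the paper: adjoin \(d-3\) semi-stable limit cycles of multiplicity \(2\) surrounding the whole picture as in~\cite[Sec.\ 3.7]{IKS-th1}, observe that the subfamily with all these cycles unbroken is of class \({\mathbf{TH}}_N^{\pitchfork}\) and is preserved (together with its counterpart) by any moderate equivalence, and apply \autoref{thm:main} to the restricted families. You merely spell out in more detail the transversality bookkeeping and the argument that \(h\) respects the unbroken-cycle locus, which the paper delegates to~\cite{IKS-th1}; the content is the same.
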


However, our attempt to use this construction to obtain a~\(4\)-parameter family with a \emph{functional} invariant failed.
Recall the trick used in~\cite{IKS-th1} to obtain functional invariants.
Consider a generic \(4\)-parameter unfolding~\(v_{\alpha }\) of a vector field~\(v\in {\mathbf{TH}}_{N}\).
It meets~\({\mathbf{TH}}_{N}\) at a \(1\)-parameter subfamily.
There are~\(N\) functions~\(\Phi _{j}\) defined at each point of this subfamily.
Suppose that a moderate topological equivalence of \(4\)-parameter equivalence preserves these functions, i.e., for two equivalent families~\(v_{\alpha }\), \({\tilde v}_{{\tilde \alpha }}\) we have~\(\Phi _{j}(v_{\alpha })=\Phi _{j}({\tilde v}_{h(\alpha )})\).
Then we would have~\(N\) different parametrizations of the same curve, or equivalently, a curve in~\({\mathbb R}^{N}\) defined up to a change of coordinates in the domain, but not in the codomain.

The main difficulty at this path is the following.
Let \(\eps , \sigma _{2}, \sigma _{3}, \eta \) be the parameters of~\(v_{\alpha }\), where the first three parameters are the same as~in~\autoref{sub:unfolding}, and~\(\eta \) is an additional parameter.
As before, we are only interested in the subfamily given by~\(\sigma _{2}=\sigma _{3}=0\), but now it is a \(2\)-parameter subfamily parametrized by~\((\eps , \eta )\).

Though we can apply \autoref{lem:freq} to subfamilies~\(\eta =\const\), the homeomorphism~\(h\) from \autoref{def-moderate-eq} may send these curves to some non-vertical curves.
In particular, many intersection points of the curves~\({\tilde \eps }={\tilde \eps }_{k,m}({\tilde \eta })\) and~\({\tilde \eps }={\tilde \iota }_{n}({\tilde \eta })\) may be located between the curve~\(h(\set{\eta =\eta _{0}})\) and the vertical line~\({\tilde \eta }=h_{\set{\eps =0}}(\eta _{0})\), so the curve~\(h(\set{\eta =\eta _{0}})\) and the corresponding vertical line may meet the curves~\({\tilde \eps }={\tilde \eps }_{k,m}({\tilde \eta })\) and~\({\tilde \eps }={\tilde \iota }_{n}({\tilde \eta })\) in a very different order.

Currently we think that the path described above leads nowhere.
Moreover, it seems that for two generic \(4\)-parameter unfoldings~\(V\), \({\tilde V}\) of the same vector field, the subfamilies given by \(\sigma _{2}=\sigma _{3}=0\) and \({\tilde \sigma }_{2}={\tilde \sigma }_{3}=0\) are moderately topologically equivalent.
This leads to the following conjecture.

\begin{conjecture}
    The ratio~\(\frac{\ln \Lambda _{e}(v_{0})}{-\ln \Lambda _{i}(v_{0})}\) is the only invariant of moderate topological equivalence of generic \(4\)-parameter unfoldings of generic vector fields~\(v\in {\mathbf{TH}}_{N}\).
    A generic \(4\)-parametric unfolding of~\(v\in {\mathbf{TH}}_{N}\) is versal, see definition in~\cite[Sec. I.1.1.5]{AAIS94}.
\end{conjecture}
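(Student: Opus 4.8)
The conjecture has two parts: that \(\varphi(v_{0})=\tfrac{\ln\Lambda_{e}(v_{0})}{-\ln\Lambda_{i}(v_{0})}\) is a \emph{complete} invariant of moderate topological equivalence of generic \(4\)-parameter unfoldings, and that such an unfolding is versal. Since the arguments of \cite{IKS-th1} already give that \(\varphi(v_{0})\) is an invariant, the first part reduces to the converse: given two generic \(4\)-parameter unfoldings \(V,\tilde V\) of \(v,\tilde v\in\mathbf{TH}_{N}\) with \(\varphi(v)=\varphi(\tilde v)\), build a moderate topological equivalence between them. The plan is to follow the architecture of the proof of \autoref{thm:main}, but to exploit the extra parameter. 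Write the parameters of \(V\) as \((\eps,\sigma_{2},\sigma_{3},\eta)\) as in \autoref{sec:families-with-more}. As in \cite[Sec. 2.3]{IKS-th1}, any linking homeomorphism \(h\) must preserve the codimension-one loci of the three saddle connections, hence sends \(\{\sigma_{2}=\sigma_{3}=0\}\) to \(\{\tilde\sigma_{2}=\tilde\sigma_{3}=0\}\) and \(\{\eps=0\}\) to \(\{\tilde\eps=0\}\); away from the polycycle the phase portraits are structurally stable and can be matched directly; so it suffices to construct the equivalence on the \(2\)-parameter subfamilies parametrized by \((\eps,\eta)\) and \((\tilde\eps,\tilde\eta)\), then extend it off the slice by the standard fibered argument.

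\textbf{Uniform asymptotics and a normal form for the diagram.} First I would upgrade the estimates of \autoref{sec:asympt-sparkl-saddle} to carry the second parameter \(\eta\): every statement there (\autoref{lem:IKS}, \autoref{lem:est-iter}, \autoref{lem:est-iter:dist}, \autoref{lem:gen:rect-chart}, \autoref{lem:gen:estim}) is already a statement about \emph{families} of Poincaré maps, so keeping the \(o(1)\) and \(O(1)\) terms uniform in a neighborhood of \(\eta=0\) is routine bookkeeping. This yields, uniformly in \(\eta\),
\begin{align*}
  \ln(-\ln\iota_{n}(\eta)) &= -n\ln\Lambda_{i}(\eta)+\xi_{i,\eta}(I(\eta))+o(1),\\
  \ln(-\ln\eps_{k,m}(\eta)) &= m\ln\Lambda_{e}(\eta)+\xi_{e,\eta}(E_{k}(\eta))+o(1),
\end{align*}
where \(\xi_{e,\eta},\xi_{i,\eta}\) are the rectifying charts of \(\Delta_{e,0,\eta},\Delta_{i,0,\eta}\). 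After the fiberwise change of coordinates \((\eps,\eta)\mapsto(\tau,\eta)\), \(\tau=\ln(-\ln\eps)/\ln\Lambda_{e}(\eta)\), and passing to the quotient cylinder \((\mathbb R/\mathbb Z)\times(\mathbb R,0)\) induced by the \(\mathbb Z\)-shift \(\tau\mapsto\tau+1\) (i.e.\ by \(\eps\mapsto\Delta_{e}(\eps)\)), the bifurcation diagram becomes, up to \(o(1)\): \(N\) marked curves \(\tau=\beta_{k}(\eta)\), with \(\beta_{k}(\eta)=\xi_{e,\eta}(E_{k}(\eta))/\ln\Lambda_{e}(\eta)\bmod 1\), together with a dense family of curves at \(\tau=n/\varphi(v_{\eta})+\omega(\eta)\), \(n\in\mathbb Z\), i.e.\ the orbit of the fiberwise rotation by \(1/\varphi(v_{\eta})\) started at \(\omega(\eta)=\xi_{i,\eta}(I(\eta))/\ln\Lambda_{e}(\eta)\).

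\textbf{The crux.} The substance of the proof is then the classification statement: \emph{two such diagrams in the \((\eps,\eta)\)-plane are homeomorphic, by a homeomorphism not required to respect the \(\eta\)-fibration, precisely when \(\varphi(v_{0})=\varphi(\tilde v_{0})\)}. Heuristically this should hold because the extra \(\eta\)-direction supplies exactly the flexibility needed to absorb the moduli \(\Phi_{k}\): in the \(3\)-parameter case the \(\Phi_{k}(v)\) are pinned because the diagram is a point configuration on a line, whereas here \(\beta_{k}(\eta)\) and \(\varphi(v_{\eta})\) vary with \(\eta\), and a planar homeomorphism may implement an essentially arbitrary continuous, monotone reparametrization transverse to the fibers, carrying \((\beta_{k}(\eta),\varphi(v_{\eta}))\) to \((\tilde\beta_{k}(\tilde\eta),\varphi(\tilde v_{\tilde\eta}))\). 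Making this rigorous is, I expect, the main obstacle. The delicate points are: (i) controlling the codimension-two crossing events \(\iota_{n}(\eta)=\eps_{k,m}(\eta)\), at which the label \(k_{n}\) of \autoref{sec:plan-proof} jumps, and showing the pattern of these crossings is governed only by \(\varphi(v_{0})\); (ii) building the matching homeomorphism compatibly near \(\{\eps=0\}\), where infinitely many curves accumulate, so that it extends continuously over that stratum; (iii) the rational case \(\varphi(v_{0})=p/q\), where, as in \autoref{lem:freq} and \autoref{cor:freq}, there is genuine \(O(1/q)\) ambiguity, so one must check the residual ambiguity is always absorbable by an \(\eta\)-reparametrization rather than being a true obstruction. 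A workable route to (i)--(iii) is to encode each diagram by the itinerary map \(\eta\mapsto\bigl(\varphi(v_{\eta});\beta_{1}(\eta),\dots,\beta_{N}(\eta)\bigr)\in\mathbb R_{+}\times(\mathbb R/\mathbb Z)^{N}\) together with its cyclic-order data, check that this map is continuous and, for generic \(V\), of maximal rank, straighten it by a Thom--Mather-type normal-form lemma, and verify that the equivalence class of the straightened object depends only on \(\varphi(v_{0})\).

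\textbf{Versality.} Once the classification is available, the versality assertion follows in the usual way (cf.\ \cite[Sec. I.1.1.5]{AAIS94}): given any deformation \(W\) of \(v_{0}\), transversality of a generic \(V\) to \(\mathbf{TH}_{N}\), together with the fact that the sole modulus is the single number \(\varphi\), which is already realized along the transversal direction of \(V\), produces an inducing map \(g\) and a fibered homeomorphism exhibiting \(W\) as moderately topologically equivalent to \(V\circ g\). I expect this last step to be comparatively soft; the entire weight of the conjecture sits in the planar curve-arrangement classification, and within it in the analysis of the crossing events \(\iota_{n}(\eta)=\eps_{k,m}(\eta)\).
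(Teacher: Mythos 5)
There is a fundamental mismatch here: the statement you are proving is stated in the paper as a \emph{conjecture}, and the paper offers no proof of it — on the contrary, the authors explicitly record (in \autoref{sec:families-with-more}) that their own attempt along essentially the route you describe ran into the obstruction that the linking homeomorphism \(h\) need not respect the \(\eta\)-fibration: the image \(h(\set{\eta =\eta _{0}})\) can meet the curves \({\tilde \eps }={\tilde \eps }_{k,m}({\tilde \eta })\) and \({\tilde \eps }={\tilde \iota }_{n}({\tilde \eta })\) in a drastically different order than the vertical line does, and they only \emph{conjecture} that this flexibility kills the invariants \(\Phi _{k}\). Your proposal reproduces this heuristic accurately, but it does not close the gap: the entire content of the conjecture is concentrated in your ``crux'' paragraph, and there the decisive steps — your items (i)–(iii), i.e.\ showing that the pattern of crossings \(\iota _{n}(\eta )=\eps _{k,m}(\eta )\) is governed only by \(\varphi (v_{0})\), constructing a homeomorphism of the curve arrangements that extends continuously over the accumulation stratum \(\set{\eps =0}\) (and then lifts to a fibered map of phase portraits satisfying the continuity requirements of \autoref{def-moderate-eq} at \(\set{0}\times \SPS(v_{0})\)), and absorbing the rational-case ambiguity — are asserted as expectations (``I expect,'' ``a workable route'') rather than proved. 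A Thom–Mather-type straightening of the itinerary map \(\eta \mapsto (\varphi (v_{\eta });\beta _{1}(\eta ),\dots ,\beta _{N}(\eta ))\) is not obviously available in this setting: the objects being matched are not smooth stratified sets but countable families of curves accumulating on \(\set{\eps =0}\) with only \(o(1)\) asymptotic control, and the required equivalence must be a single homeomorphism of the ambient parameter space compatible with all of them simultaneously.

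Two further points. First, even granting the classification of the \((\eps ,\eta )\)-diagrams, passing from an equivalence of the \(2\)-parameter slices \(\sigma _{2}=\sigma _{3}=0\) to a moderate equivalence of the full \(4\)-parameter families is not the ``standard fibered argument'' you invoke; already in \cite{IKS-th1} the reduction goes in the opposite direction (from an equivalence of the families to consequences on the slice), and constructing an equivalence of the families from slice data is an additional, nontrivial construction. Second, the versality assertion is strictly stronger than completeness of the invariant \(\varphi \) for generic \(4\)-parameter unfoldings: it quantifies over \emph{all} deformations of \(v_{0}\), with any number of parameters, and your one-sentence sketch does not engage with that. So the proposal is a reasonable research plan aligned with the authors' own intuition, but it is not a proof, and nothing in the paper can be used to certify its missing core.
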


\subsection{Enriched dynamics}%
\label{sec:enriched-dynamics}
This article was inspired by the following idea.
 
Fix two closed curves curves without contact, \(C_{e}\) surrounding~\(\gamma \), and \(C_{i}\) close to~\(l\).
For a small \(\eps >0\), the correspondence map~\({\mathcal P}_{\eps }:C_{e}\to C_{i}\) along the trajectories of~\(v_{\eps }\) is defined on \(C_{e}\setminus s(\eps )\), and can be extended to a homeomorphism \(C_{e}\to C_{i}\) by setting \({\mathcal P}_{\eps }({\overline S}(\eps ))={\overline U}(\eps )\), where \(\set{{\overline S}(\eps )}=s(\eps )\cap C_{e}\), \(\set{{\overline U}(\eps )}=u(\eps )\cap C_{i}\).
This family of homeomorphisms contains a lot of information about bifurcations in the subfamily \({\mathcal E}_{+}\).
In particular, the separatrix connections described in~\autoref{sec:sparkl-saddle-conn} can be alternatively described by equations \({\mathcal P}({\overline E}_{k}(\eps ))={\overline S}(\eps )\), \({\mathcal P}({\overline U}(\eps ))={\overline I}(\eps )\), where \(\set{{\overline E}_{k}(\eps )}=e_{k}(\eps )\cap C_{e}\), \(\set{{\overline I}(\eps )}=i(\eps )\cap C_{i}\), and not yet discussed separatrix connections between~\(e_{k}(\eps )\) and~\(i(\eps )\) are given by~\({\mathcal P}_{\eps }({\overline E}_{k}(\eps ))={\overline I}(\eps )\).

The idea was to study the behaviour of~\({\mathcal P}_{\eps }\) as~\(\eps \to 0\).
We call the set of limit points of~\({\mathcal P}_{\eps }\), \(\eps \to 0\), with respect to an appropriate topology in the space of maps \(C_{e}\to C_{i}\) the \emph{enriched dynamics} of the original vector field.
The term was introduced by J.~Hubbard in the context of study of possible limits of the filled-in Julia set of~\(z\mapsto z^{2}+c\) as~\(c\) approaches the Mandelbrot set.

For a generic one-parametric unfolding of a quasi-generic vector field with a semi-stable limit cycle, in appropriate coordinates a similar map is close to a rotation.
This was used in~\cite{MP,GIS-semistable} to fully describe the classifications of such unfoldings with respect to normal and weak topological equivalence.

This idea led us to the same invariants as in \autoref{thm:main}.
When the first draft of this article was written, we proved the same theorem by simpler arguments, and rewrote the article from scratch.

It turns out that for \(\eps \) small enough, the graph of \({\mathcal P}_{\eps }\) looks like the letter “\(L\)” with a rounded corner.
More precisely, \({\mathcal P}_{\eps }\) expands a small interval \(\left({\overline S}(\eps ), {\overline S}(\eps )+\frac{C}{-\ln \eps }\right)\) to an interval slightly shorter than the whole circle, and contracts the interval \(\left({\overline S}(\eps )+\frac{C}{-\ln \eps }, {\overline S}(\eps )+\ln \Lambda _{e}\right)\) to a very short interval.
We have a plan to use this fact together with more precise estimates on~\(\eps _{k,m}\) and~\(\iota _{n}\) to solve the following problem.
\begin{problem}
    Describe all invariants of generic families of class~\({\mathbf{TH}}_{N}\).
\end{problem}
We are almost sure that the tuple \((\xi _{e}(E_{1}), \dots ,\xi _{e}(E_{N}),\xi _{i}(I), \ln \Lambda _{i}, \ln \Lambda _{e})\) defined up to some simple equivalence relation is the full invariant of classification of the subfamilies~\(V_{{\mathcal E}}\) of generic families~\(V\in {\mathbf{TH}}_{N}\), and hope that the same holds for the classification of the families themselves.

\section*{Acknowledgements}
We are grateful to Yu. Ilyashenko for the statement of the problem, his constant interest and support.
Our deep thanks to J. Hubbard whose talk about enriched dynamics inspired this paper.

\printbibliography
\end{document}